\documentclass[11pt]{amsart}
\usepackage{amsmath}
\usepackage{amsfonts}
\usepackage{amssymb}
\usepackage{amscd}
\usepackage{amsthm}
\usepackage{amsbsy}
\usepackage{graphicx}
\usepackage{bm}
\usepackage{enumerate}
\usepackage[capitalize]{cleveref}
\newtheorem{theorem}{Theorem}[section]
\newtheorem{proposition}[theorem]{Proposition}
\newtheorem{lemma}[theorem]{Lemma}
\newtheorem{corollary}[theorem]{Corollary}

\theoremstyle{definition}
\newtheorem{definition}[theorem]{Definition}
\newtheorem{remark}[theorem]{Remark}

\newcommand{\hd}{d_{\vec{H}}}
\newcommand{\gl}{\mathcal{GL}}
\newcommand{\al}{\mathcal{AL}}
\newcommand{\g}[1]{\operatorname{genus}(#1)}

\title[Bijections of geodesic lamination space]{Bijections of geodesic lamination space preserving left Hausdorff convergence}
\author[Ken'ichi Ohshika and Athanase Papadopoulos]{Ken'ichi Ohshika and Athanase Papadopoulos}

 \address{A. Papadopoulos, Institut de Recherche Math\'ematique Avanc\'ee (Universit\'e de Strasbourg et CNRS),
7 rue Ren\'e Descartes
67084 Strasbourg Cedex France,  papadop@math.unistra.fr ; K. Ohshika, 
Department of Mathematics, Graduate School of Science, Osaka University Toyonaka, Osaka 560-0043, Japan, 
ohshika@math.sci.osaka-u.ac.jp}

\begin{document}
\maketitle
  \begin{abstract}
  We introduce an asymmetric distance function, which we call the ``left Hausdorff distance function", on the space of geodesic laminations on a closed hyperbolic surface of genus at least 2. This distance is an asymmetric version of the Hausdorff distance between compact subsets of a metric space. We prove a rigidity result for the action of the extended mapping class group of the surface on the space of geodesic laminations equipped with the topology induced from this distance.  More specifically, we prove that there is a natural homomorphism from the extended mapping class group into the group of bijections of the space of geodesic laminations that preserve left Hausdorff convergence and that this homomorphism  is an isomorphism.
  
  The final version of this paper will appear in \emph{Monatshefte f\"ur Mathematik}.

 \medskip
 
 \noindent AMS classification: 37E30, 57M99.

 \medskip
 
 \noindent Keywords: Hyperbolic structure, geodesic lamination, geodesic lamination space, Hausdorff distance, asymmetric Hausdorff distance, mapping class group, complete lamination, finite lamination, approximable lamination, rigidity.
 
 \end{abstract}

\section{Introduction}
Let $S$ be a connected closed orientable surface of genus $g\geq 2$. Let $\mathrm{Mod}(S)$ be the mapping class group of $S$, that is, the group of homotopy classes of orientation-preserving homeomorphisms of $S$, and $\mathrm{Mod}^*(S)$ the extended mapping class group of $S$, that is, the group of all homotopy classes of homeomorphisms of $S$.
 In the last three decades, motivated by Thurston's works on surfaces, a recurrent theme in low-dimensional topology was the study of actions of the groups $\mathrm{Mod}(S)$ and $\mathrm{Mod}^*(S)$ on various spaces associated to the surface $S$. The spaces that were considered are equipped with different kinds of structures, including the following: 
 \begin{enumerate}
 \item complex analytic structures, including the complex structures of the universal Teichm\"uller curve \cite{T32}, of Teichm\"uller space \cite{Royden1971, Markovic2003}, of spaces of quadratic differentials with the $L^1$-norm  \cite{Royden1971, Markovic2003}, etc.;
 \item  metric structures: the various metrics on Teichm\"uller space, including the Teichm\"uller metric \cite{Royden1971, Markovic2003}, the Weil-Petersson metric \cite{MW2002} and the Thurston metric \cite{Walsh-Hand};
\item combinatorial structures: the curve complex \cite{Iv}, the arc complex \cite{IM, Disarlo}, the arc and curve complex \cite{KP1}, the complex of domains \cite{MCP}, and there are many others;
\item  the piecewise-linear structure of the space of measured laminations on the surface \cite{Papadopoulos2008};
\item topological structures: actions by homeomorphisms on spaces of laminations or foliations \cite{AMO, CPP, Pa, Oh, Oh1, OP}. 
\end{enumerate}
These are only examples; there are other classes of spaces equipped with actions of mapping class groups. 

The main result obtained in most of these cases is that the structure considered is rigid in the sense that its automorphism group coincides with the natural injective image of the (extended) mapping class group in it. (We are leaving aside some exceptional cases of surfaces---finite in number---of low genus and with a small number of boundary components which arise in each case.) There is an  exception though, namely, the case of the complex of domains, where there is a large class of automorphisms of the complex that are not induced by mapping classes, see \cite{MCP} where these automorphisms are called ``exchange automorphisms".

 Although the rigidity results we mentioned almost always have the same form, the methods and the techniques of proof in each case require new tools that are specific to the setting. At the same time, these methods highlight new properties of the spaces under consideration.
We refer the reader to the expository paper \cite{Pa1} for an overall view on this topic.

The space $\mathcal{GL}(S)$ of geodesic laminations (for a fixed hyperbolic metric) was introduced by Thurston in his lecture notes \cite{Th} (see Chapter 8, and in particular \S \ 8.1). This space plays a major role in Thurston's theory of 3-manifolds and Kleinian groups. It is classically equipped with two different topologies, also introduced by Thurston, the so-called Thurston topology (which he called the geometric topology) and the Hausdorff topology. The homeomorphism group of $\mathcal{GL}(S)$ with respect to the Thurston topology was studied in the paper  \cite{CPP}, where it was shown that this group coincides with the natural image in it of the extended mapping group. It was also noted in the same paper that the homeomorphism group of $\mathcal{GL}(S)$  with respect to the Hausdorff topology is much larger than the image of the extended mapping class group in it. The reason is that simple closed geodesics are isolated points in $\mathcal{GL}(S)$, therefore any two such points  can be permuted by homeomorphisms of the space; in particular we can send a separating curve to a non-separating curve by a homeomorphism of $\mathcal{GL}(S)$ and such a homeomorphism is obviously not induced by an element of the mapping class group.

In the present paper, we study the action of $\mathrm{Mod}^*(S)$ on the space of geodesic laminations  $\mathcal{GL}(S)$ equipped with a new structure which we now introduce. This is a topological structure on $\mathcal{GL}(S)$ induced by an asymmetric version of the Hausdorff distance. We prove a rigidity result concerning this structure. 
 
 Before stating precisely the result, we note that the study of asymmetric distance functions in Teichm\"uller theory has been an active research theme since the introduction by Thurston of his asymmetric metric on Teichm\"uller space. Among the early works on this subject, we mention the paper \cite{P1991} in which the author determines the limits of stretch rays (which are geodesics for this metric).
   The paper  \cite{2007a} by Papadopoulos and Th\'eret is concerned with the topology associated to this metric. In the paper \cite{Theret}, Th\'eret studies the convergence at infinity of anti-stretch lines (these are geodesics traversed in the backward direction).
In the paper \cite{2012b},   Liu, Papadopoulos, Su and Th\'eret obtain a classification of mapping classes by analysing their action on this metric.
  In the paper  \cite{2009j}, Papadopoulos and Th\'eret construct geodesic lines that remain geodesic (up to reparametrisation) when they are traversed in the backward direction.
   In the paper  \cite{Walsh-Hand}, Walsh proves a rigidity result for the action of the mapping class group on this metric,  and there are other papers on the subject. Furthermore, asymmetric metrics whose definition mimics the Thurston metric are now studied in various contexts,  see e.g. the paper \cite{GK} by Gu\'eritaud and Kassel for an asymmetric metric on a class of 3-manifolds, the paper \cite{AB} by Algom-Kfir and Bestvina for an asymmetric metric on outer automorphism groups of free groups, and the paper \cite{SM} by Meinert for an asymmetric metric on deformation spaces of $G$-trees. We also mention the papers  \cite{DGK} by  Danciger,  Gu\'{e}ritaud and Kassel,  where Thurston's asymmetric metric is used in relation with AdS geometry and \cite{GLM} by Goldman,  Labourie and Margulis in relation with deformation spaces of proper affine actions on $\mathbb{R}^3$. Let us also note that   geodesic laminations of various kinds appear in some contexts as   tangent vectors to Teichm\"uller space, and equipping the space of geodesic laminations with an asymmetric distance (as we do in the present paper) may be regarded in some sense as equipping the tangent space of Teichm\"uller space  with such a distance. In his foundational paper \emph{Minimal stretch maps between hyperbolic surfaces} \cite{Th1} (l.\,7 of p.\,40), Thurston makes an elliptical remark on such a distance function (or, rather, a topology associated to such a distance function), when he talks about ``a non-Hausdorff topology on the set of chain recurrent laminations, where a neighborhood of a lamination consists of all laminations contained in a neighborhood of the lamination of the surface''. This is precisely the property that defines the topology associated to the asymmetric distance function on $\mathcal{GL}(S)$ which we are considering in this paper.

Finally, we mention that in Finsler geometry, asymmetric distances play an important role. As a matter of fact, in this field, if a metric is symmetric, it is called \emph{reversible}. 

 Before introducing our asymmetric distance on the space of geodesic laminations, we start with a few definitions.

 We denote by $d_m$ the distance function on $S$ induced by a fixed hyperbolic metric $m$.
\begin{definition}
For any ordered pair of compact subsets $K, L\subset S$, the \emph{left Hausdorff distance} $\hd(K,L)$  from $K$ to $L$ is defined as
$$\hd(K,L)=\inf\{\epsilon \mid K \subset N_\epsilon(L)\},$$
where for $\epsilon \geq 0$, $N_\epsilon$ denotes the $\epsilon$-neighbourhood with respect to $d_m$.
\end{definition}
It is easy to see by examples that the left Hausdorff distance from $K$ to $L$ is generally different from the left Hausdorff distance from $L$ to $K$.

Being a space of compact subspaces of $S$, $\mathcal{GL}(S)$ is equipped with an induced left Hausdorff distance function which we also denote by  $\hd$.  

\begin{definition} 
 
 Let $f : \gl(S) \to \gl(S)$ be a bijection.
We say that $f$ preserves left Hausdorff convergence if for any sequence $\{\lambda_i \in \gl(S)\}$ and for any $\mu \in \gl(S)$, we have
$$\hd(\lambda_i, \mu) \to 0 \Leftrightarrow \hd(f(\lambda_i), f(\mu)) \to 0.$$
\end{definition}
It is easy to see that the following equivalence holds for any bijection $f$ preserving left Hausdorff convergence:
 \[\hd(\lambda, \mu)=0 \Leftrightarrow \hd(f(\lambda), f(\mu))=0.\]

We let  $\mathrm{Aut}( \gl(S))$ be the group of bijections of $ \gl(S)$ that preserve  
left Hausdorff convergence. We have a natural homomorphism \[\mathrm{Mod}^*(S)\to\mathrm{Aut}( \gl(S)).\]
The aim of this paper is to prove the following.

\begin{theorem}
\label{main}
The natural homomorphism \[\mathrm{Mod}^*(S)\to\mathrm{Aut}( \gl(S))\] is an isomorphism.
\end{theorem}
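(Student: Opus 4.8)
The plan is to prove that the natural homomorphism $\mathrm{Mod}^*(S) \to \mathrm{Aut}(\gl(S))$ is both injective and surjective. Injectivity should follow from standard facts: the action of $\mathrm{Mod}^*(S)$ on simple closed geodesics (which sit inside $\gl(S)$) is already faithful for $g \geq 2$, and left Hausdorff convergence is defined metrically via $d_m$, so a mapping class acting trivially on $\gl(S)$ must fix every simple closed curve and hence be trivial. The real work is surjectivity: given $f \in \mathrm{Aut}(\gl(S))$, I must produce a mapping class inducing it. My overall strategy is to reconstruct, purely from the left-Hausdorff-convergence structure, enough combinatorial data about laminations so that $f$ descends to an automorphism of one of the known rigid simplicial complexes (the curve complex being the natural target, by Ivanov-type rigidity).

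\emph{Characterizing distinguished laminations intrinsically.} The first key step is to give order-theoretic and topological characterizations, invariant under any $f \in \mathrm{Aut}(\gl(S))$, of special classes of laminations. Note that $\hd(\lambda,\mu)=0$ is exactly the inclusion relation $\lambda \subset \mu$ of laminations (since $K \subset N_\epsilon(L)$ for all $\epsilon>0$ iff $K \subset L$ for compact $L$); the displayed equivalence in the excerpt shows $f$ preserves the relation $\lambda \subset \mu$ in both directions, so \emph{$f$ is an automorphism of the inclusion poset on $\gl(S)$}. This is the crucial rigidifying observation. From the poset structure I can detect: the minimal nonempty laminations (these are the minimal sublaminations, among which the simple closed geodesics are the finite ones); the maximal elements (the complete/maximal laminations); and, using convergence, I can distinguish simple closed geodesics from minimal laminations with uncountable support, since a simple closed geodesic is approximated in the left Hausdorff sense only in degenerate ways whereas an irrational minimal lamination is a left Hausdorff limit of a genuine sequence of distinct laminations strictly containing subarcs near it. I expect this is where the ``approximable lamination'' notion advertised in the keywords enters: an approximable lamination should be characterizable as one that is a left Hausdorff limit of a sequence of strictly larger laminations, and this property is preserved by $f$.

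\emph{Descending to the curve complex.} Once simple closed geodesics are shown to form an $f$-invariant subset, the next step is to recover the disjointness/intersection relation between two curves from the convergence structure. The idea is that two simple closed geodesics $\alpha,\beta$ are disjoint precisely when their union $\alpha \cup \beta$ is again a geodesic lamination, i.e. when there exists $\nu \in \gl(S)$ with $\alpha \subset \nu$, $\beta \subset \nu$, and $\nu$ minimal among such upper bounds — a condition phrased entirely in the $f$-invariant poset. Hence $f$ preserves disjointness of simple closed curves and therefore induces an automorphism of the curve complex $\mathcal{C}(S)$. By Ivanov's theorem (valid for $g \geq 2$), every automorphism of $\mathcal{C}(S)$ is induced by a unique element $\phi \in \mathrm{Mod}^*(S)$.

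\emph{Upgrading to all of $\gl(S)$.} The final step is to show that $f$ and the automorphism induced by $\phi$ agree on \emph{all} of $\gl(S)$, not merely on simple closed curves. Replacing $f$ by $\phi^{-1} \circ f$, I reduce to showing that an $f \in \mathrm{Aut}(\gl(S))$ fixing every simple closed geodesic is the identity. Here I use that the simple closed geodesics are dense in $\gl(S)$ for the relevant structure and that any lamination $\lambda$ is determined by the set of simple closed geodesics converging to it from the left together with the inclusion poset: concretely, $\lambda = \bigcup\{\alpha : \alpha \text{ simple closed geodesic}, \alpha \subset \lambda\}$ closure, and $f$ preserves both the curves and the inclusions, forcing $f(\lambda)$ to have the same simple closed sublaminations and the same approximating sequences as $\lambda$, whence $f(\lambda)=\lambda$. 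I expect \textbf{the main obstacle} to be precisely this last reconstruction: showing that left Hausdorff convergence data, which is inherently one-sided and insensitive to how $\lambda$ sits above its sublaminations, nonetheless pins down $\lambda$ uniquely among laminations sharing the same simple-closed-curve and poset data. Making this rigorous will require a careful analysis of which laminations arise as left Hausdorff limits of curves and of the non-Hausdorffness of the topology (the point of Thurston's elliptical remark quoted in the introduction), and this is where the genuinely new technical content of the paper must lie.
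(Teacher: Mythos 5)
Your first half — observing that $\hd(\lambda,\mu)=0$ is exactly inclusion, so that $f$ is an automorphism of the inclusion poset, detecting simple closed geodesics by the degeneracy of left Hausdorff approximation, recovering disjointness from the existence of a common upper bound in the poset, and invoking Ivanov — is essentially the paper's route through \cref{scc,inclusion,multicurve,Ivanov}. But the final step, which you yourself flag as the main obstacle, is not just unfinished: the mechanism you propose is false. A lamination is \emph{not} determined by its simple closed sublaminations together with the poset: a minimal lamination that is not a simple closed geodesic has no proper sublaminations at all, so your set $\{\alpha \subset \lambda : \alpha \text{ a simple closed geodesic}\}$ is empty, and more generally the closure of the union of closed sublaminations recovers nothing of the minimal components or of the non-compact isolated leaves. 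The paper's solution approximates from \emph{outside} rather than inside: a multicurve sequence $c_i \to \mu$ in the ordinary Hausdorff topology satisfies both $\hd(c_i,\mu)\to 0$ and the key one-sided property that any $\lambda$ with $\hd(c_i,\lambda)\to 0$ must \emph{contain} $\mu$ (\cref{approx}); since $f(c_i)=g(c_i)$, this yields $g(\mu) \supset f(\mu)$ and, by symmetry between $f$ and $g$, equality on approximable laminations (\cref{approximable rigidity}). The remaining content — the bulk of the paper — is that approximability fails to see the spiraling data of non-compact isolated leaves of finite laminations, which is also invisible to the curve complex; this forces the analysis of limit components, homotopy classes relative to compact leaves, and the coherent/incoherent dichotomy (\cref{smallest lamination,homotopy class,approachable,even unapproachable,finite rigidity}), after which density of finite laminations in the Hausdorff topology, plus the fact that left Hausdorff convergence only gives $f(\lambda) \supset h(\lambda)$ and a count of minimal components and isolated leaves to upgrade containment to equality, finishes the proof. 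None of this machinery is present or replaceable in your sketch.

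There is also a concrete genus-$2$ error running through your proposal. The action of $\mathrm{Mod}^*(S)$ on isotopy classes of simple closed curves is \emph{not} faithful when $\g{S}=2$: the hyperelliptic involution $\iota$ fixes every curve class, so your injectivity argument (``fixes every simple closed curve, hence trivial'') fails, and likewise Ivanov's theorem gives a \emph{unique} mapping class only for $\g{S}\geq 3$ — for $\g{S}=2$ there are two candidates differing by $\iota$. The paper handles injectivity by exhibiting a finite lamination whose spiraling leaves are not $\iota$-invariant (end of the proof of \cref{main}), and its surjectivity argument must carry the two-fold ambiguity $f=g$ versus $f=\iota\circ g$ through every step, showing via \cref{genus 2} and a pants-decomposition connectivity argument (elementary moves, after Hatcher) that the same alternative holds globally. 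Any correct write-up must address both points explicitly.
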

\sloppy

\begin{remark}
It is possible to define a topology associated to the left Hausdorff distance by taking the sets of the form $U_\epsilon(\lambda)=\{\mu \mid \hd(\mu, \lambda) < \epsilon\}$ as a basis for a fundamental system of neighbourhoods of a lamination $\lambda$. The result of this paper can then be formulated in terms of homeomorphisms of $\mathcal{GL}(S)$ with respect to this topology. 
\end{remark}
 \section{Preliminaries}\label{s:pre}
In this section, we recall a few definitions concerning geodesic laminations and related matters on hyperbolic surfaces. We shall use all these definitions later in the paper. 
We refer the reader to \cite{Th, CEG, BC, MT} for more details on this topic.

Let $S$ be, as before, a closed orientable surface equipped with a hyperbolic structure $m$.
A \emph{geodesic lamination} on $S$ is a closed subset of $S$ which is the union of disjoint simple geodesics, called the \emph{leaves} of the lamination.
A \emph{component} of a geodesic lamination $\lambda$ is a geodesic lamination on $S$ which is a subset of $\lambda$.
A lamination is said to be \emph{minimal}  if it contains no non-empty proper sublamination.
A leaf of a geodesic lamination is said to be \emph{isolated} if, as a subset of $S$, it has a neighbourhood containing no other leaf than itself.
Any geodesic lamination $\lambda$ has a unique decomposition into finitely many minimal components and finitely many non-compact isolated leaves. Note that in this statement, the non-compact isolated leaves are not minimal components, the reason being that they do not constitute a sublamination (they are not closed subsets of $S$). 
An isolated leaf is either a closed geodesic or non-compact. We shall use the fact that each end of a non-compact isolated leaf spirals around some minimal component of $\lambda$, and we refer the reader to \cite[Theorem 1.4.2.8]{CEG} for a proof of this fact.

In this paper, laminations consisting only of isolated leaves play an important role.

\begin{definition}
\label{finite lamination}
A geodesic lamination is called {\em finite} if all its minimal components are simple closed geodesics.
\end{definition}

(Note that a finite lamination may contain components which are not simple closed geodesics.)

The following result is due to Thurston \cite{Th}. A detailed proof can be found in \cite[Theorem 4.2.14]{CEG}.

\begin{lemma}
\label{finite laminations dense}
The set of finite laminations is dense in $\gl(S)$ with respect to the Hausdorff topology.
\end{lemma}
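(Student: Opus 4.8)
The plan is to reduce the statement to the approximation of a single minimal lamination by simple closed geodesics, and then to reassemble. By the structure theorem recalled above, every $\lambda\in\gl(S)$ decomposes into finitely many minimal components $\mu_1,\dots,\mu_k$ and finitely many non-compact isolated leaves $\ell_1,\dots,\ell_m$, each end of each $\ell_i$ spiralling into some $\mu_j$. Since a finite lamination is exactly one whose minimal components are simple closed geodesics, the closed-geodesic minimal components and the isolated leaves are already of the permitted type, and the only genuinely new phenomenon to approximate is a minimal component that is not a simple closed geodesic. Thus the heart of the argument is the following assertion: for every minimal lamination $\mu$ that is not a simple closed geodesic and every $\epsilon>0$, there is a simple closed geodesic $\gamma$, lying in an arbitrarily small neighbourhood of $\mu$, with Hausdorff distance less than $\epsilon$ from $\mu$.

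To produce $\gamma$ I would use a closing-up procedure. Fix a leaf $\ell$ of $\mu$; since $\mu$ is minimal, $\ell$ is dense in $\mu$, so a sufficiently long arc $\ell([0,T])$ is $\epsilon$-dense in $\mu$. The unit tangent lift of $\ell$ is an orbit of the geodesic flow on $T^1S$ lying in the compact invariant set carried by $\mu$, and recurrence of this orbit yields times $0<T'<T$ with $(\ell(T'),\ell'(T'))$ arbitrarily close to $(\ell(0),\ell'(0))$ in $T^1S$. I would then close the arc $\ell([0,T'])$ into an embedded loop $c$ by a short connecting arc, joining the two nearly parallel, nearly coincident ends with arbitrarily small total turning. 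Because $c$ is embedded and contains a long geodesic segment it is essential: a null-homotopic lift would force a short connector to span the endpoints of a long lifted geodesic segment, which is impossible. Hence its free homotopy class contains a simple closed geodesic $\gamma$, the geodesic representative of a simple closed curve being simple.

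The main obstacle is to show that $\gamma$ is genuinely $\epsilon$-Hausdorff-close to $\mu$. Working in $\mathbb{H}^2$, the $\langle c\rangle$-invariant broken geodesic obtained by concatenating translates of the lifted segment with the short low-turning connectors is a quasi-geodesic whose multiplicative and additive constants can be made arbitrarily close to $1$ and $0$ by taking the turning small and the segments long; by stability of quasi-geodesics in $\mathbb{H}^2$ its axis, which projects to $\gamma$, fellow-travels it within a distance tending to $0$. Consequently $\gamma$ lies in an arbitrarily small neighbourhood of the segment $\ell([0,T'])\subset\mu$ and, being a close shadow of an $\epsilon$-dense arc, it also comes $\epsilon$-close to every point of $\mu$; both Hausdorff inequalities follow. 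The delicate point is to realise the closing arc with simultaneously small turning and embeddedness inside a neighbourhood of $\mu$ so small that no other strand of $\ell$ interferes; this is exactly the step demanding recurrence of the geodesic flow (equivalently, the Anosov closing lemma for the geodesic flow, which is Anosov on a closed hyperbolic surface), and it is the technical core of the argument.

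Finally I would reassemble. Carrying out the construction inside pairwise disjoint small neighbourhoods of the non-closed minimal components produces pairwise disjoint simple closed geodesics $\gamma_j$ close to the $\mu_j$, which together with the closed-geodesic minimal components form a finite lamination approximating $\bigcup_j\mu_j$. For the isolated leaves, each $\ell_i$ has its two ends eventually confined to arbitrarily small neighbourhoods of minimal components, so I would truncate $\ell_i$ to a long compact core and re-route its ends to spiral into the corresponding $\gamma_j$; the modified leaf agrees with $\ell_i$ along its core and differs only near the $\mu_j$, where both it and $\ell_i$ are already close to $\gamma_j$, so it is Hausdorff-close to $\ell_i$ and becomes an isolated leaf of the approximating finite lamination. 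Keeping every modification inside disjoint small neighbourhoods preserves disjointness, and collecting the $\gamma_j$, the retained closed-geodesic components, and the re-routed isolated leaves yields a finite lamination within Hausdorff distance $\epsilon$ of $\lambda$. I expect this last stage to be routine but bookkeeping-heavy, the only conceptual difficulty being the closeness estimate for $\gamma$ discussed above.
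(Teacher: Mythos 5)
The paper itself gives no proof of this lemma: it attributes the statement to Thurston and refers to \cite[Theorem 4.2.14]{CEG} for the details, so your argument has to stand on its own---and it does, essentially reconstructing the standard proof behind that citation. Your core device (closing a long, $\epsilon$-dense segment of a dense leaf of a minimal component into an embedded loop and passing to the simple closed geodesic representative, with both Hausdorff inequalities obtained from quasi-geodesic stability in $\mathbb{H}^2$) is exactly the argument the paper deploys internally in the proofs of \cref{scc} and \cref{multicurves}, there phrased as choosing an arc $a_i$ of length greater than $i$ closed up by a transversal $b_i$ of length less than $1/i$ whose sides are matched so that the closed-up curve is embedded. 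What you add beyond anything written in the paper is the reassembly step for the non-compact isolated leaves, which is precisely the bookkeeping that \cite{CEG} carries out; and your explicit fellow-traveling estimates are, if anything, more careful than the paper's one-line assertion that the curves $c_i$ converge. One caution on the closing step: your parenthetical appeal to the Anosov closing lemma would by itself produce a nearby closed geodesic but says nothing about its \emph{simplicity}, so the embedded-loop closing you describe (matching the paper's ``different sides'' condition in the proof of \cref{scc}) is the version to keep.

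One step is stated loosely and needs a routine repair: the ``re-routed'' leaf you describe---a geodesic core of $\ell_i$ with grafted tails spiralling onto $\gamma_j$---is not a geodesic, so it cannot literally be a leaf of an element of $\gl(S)$. You must pass to the geodesic realization of the re-routed system: in the prescribed relative homotopy class (rel the spiralling data) there is a genuine simple geodesic spiralling onto the chosen $\gamma_j$'s and disjoint from the other pieces, and the same quasi-geodesic stability argument you used for $\gamma_j$ shows this straightened leaf remains Hausdorff-close to $\ell_i$. Also mind the order of quantifiers in the final assembly: the truncated cores must be fixed first (their length grows as $\epsilon\to 0$, so they approach the $\mu_j$), and only afterwards are the neighbourhoods of the $\mu_j$ chosen small enough to miss the cores; as written, ``keeping every modification inside disjoint small neighbourhoods'' could be read circularly. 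Neither point breaks the plan; both are repairable with tools you already invoke.
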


\section{Actions on curves} 
From this section until \cref{s:finite}, we assume that $f$ is a bijection of $\mathcal{GL}(S)$ preserving left Hausdorff convergence.

\begin{lemma}
For any simple closed geodesic $c$, its image $f(c)$ is again a simple closed geodesic.
\end{lemma}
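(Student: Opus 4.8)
The plan is to give an order-theoretic and topological characterisation of simple closed geodesics inside $\gl(S)$ that is manifestly preserved by $f$, and then to feed in the hypothesis that $f$ preserves left Hausdorff convergence. First I would record the elementary observation that for laminations $\lambda,\mu$ one has $\hd(\lambda,\mu)=0$ if and only if $\lambda\subseteq\mu$, since $\bigcap_{\epsilon>0}N_\epsilon(\mu)=\mu$ by compactness. Combined with the equivalence $\hd(\lambda,\mu)=0\Leftrightarrow\hd(f(\lambda),f(\mu))=0$ already noted in the introduction, this shows that $f$ is an automorphism of the poset $(\gl(S),\subseteq)$; in particular $f$ fixes the minimum $\emptyset$ and therefore restricts to a bijection of the set of non-empty laminations.

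The heart of the argument is the claim that a non-empty lamination $\lambda$ is a simple closed geodesic if and only if it is \emph{left-Hausdorff isolated}, meaning that there is no sequence $\{\lambda_i\}$ of non-empty laminations with $\lambda_i\neq\lambda$ for all $i$ and $\hd(\lambda_i,\lambda)\to 0$. I would establish this in three cases. If $\lambda$ is not minimal, it contains a non-empty proper sublamination $\mu_0$, and the constant sequence $\lambda_i=\mu_0$ satisfies $\hd(\mu_0,\lambda)=0$, so $\lambda$ is not isolated. If $\lambda$ is minimal but not a simple closed geodesic, then its unique minimal component is $\lambda$ itself and is not a closed geodesic, so $\lambda$ is not a finite lamination; by \cref{finite laminations dense} and the metrisability of the Hausdorff topology there is a sequence of finite laminations converging to $\lambda$ in the Hausdorff metric, these laminations are distinct from $\lambda$, and since Hausdorff convergence implies $\hd(\lambda_i,\lambda)\to 0$, again $\lambda$ is not isolated.

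The remaining, and decisive, case is that of a simple closed geodesic $c$. Here I would show that for all sufficiently small $\epsilon$ the only geodesic laminations contained in $N_\epsilon(c)$ are $\emptyset$ and $c$. For small $\epsilon$ the set $N_\epsilon(c)$ is an embedded annulus, and passing to the cyclic cover of $S$ associated with $\langle c\rangle$, the annulus lifts to a compact sub-annulus of a hyperbolic cylinder whose only closed geodesic is the core; a complete geodesic that remains in this compact sub-annulus for all time must be the core, so every leaf of a lamination contained in $N_\epsilon(c)$ equals $c$. Consequently any sequence with $\hd(\lambda_i,c)\to 0$ is eventually contained in $\{\emptyset,c\}$, so $c$ is left-Hausdorff isolated. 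This annulus step is the main obstacle; once it is in place, the characterisation of simple closed geodesics is complete.

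Finally, I would observe that left-Hausdorff isolation is preserved by $f$. Since the defining equivalence is symmetric in $f$ and $f^{-1}$, the inverse $f^{-1}$ also preserves left Hausdorff convergence, and both maps fix $\emptyset$ and hence preserve non-emptiness. If $f(c)$ failed to be isolated, a witnessing sequence of non-empty laminations distinct from $f(c)$ and converging to it would pull back through $f^{-1}$ to a sequence contradicting the isolation of $c$. Therefore $f(c)$ is left-Hausdorff isolated, and by the characterisation above it is a simple closed geodesic, as required.
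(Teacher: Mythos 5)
Your proposal is correct and follows essentially the same route as the paper: the paper also derives the lemma from a characterisation of simple closed geodesics as precisely the laminations isolated for left Hausdorff convergence (\cref{scc}), proved via the annulus neighbourhood in one direction and, in the other, a constant sequence at a proper sublamination together with an approximation of any minimal non-closed lamination. The only variation is minor: where you invoke the density of finite laminations (\cref{finite laminations dense}), the paper constructs the approximating simple closed geodesics explicitly by closing up a long arc of a leaf with a short transverse arc, and your annular-cover argument and explicit treatment of the empty lamination merely fill in steps the paper asserts without detail.
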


This is a consequence of the following characterisation of simple closed geodesics in terms of $\hd$.

\begin{lemma}
\label{scc}
Let $c$ be a simple closed geodesic, and suppose that $\hd(\lambda_i, c) \to 0$ for some sequence $\{\lambda_i\} \subset \gl(S)$.
Then $\lambda_i=c$ for large $i$.

Conversely, if $\hd(\lambda_i, \mu) \to 0$ implies that $\lambda_i=\mu$ for large $i$, then $\mu$ is a simple closed geodesic.
\end{lemma}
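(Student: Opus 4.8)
The plan is to prove the two assertions separately, since each rests on a different geometric input.

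For the first (forward) assertion, the key geometric input is that for a simple closed geodesic $c$ there is a threshold $\epsilon_0 > 0$ such that the only complete geodesic of $(S,m)$ entirely contained in the tubular neighbourhood $N_{\epsilon_0}(c)$ is $c$ itself. I would establish this by passing to the universal cover $\mathbb{H}^2$. By the collar lemma one may choose $\epsilon_0$ so small that the preimage of $N_{\epsilon_0}(c)$ is a disjoint union of $\epsilon_0$-neighbourhoods of the axes covering $c$. A leaf of a lamination contained in $N_{\epsilon_0}(c)$ then lifts to a connected geodesic lying in a single such neighbourhood $N_{\epsilon_0}(\tilde c)$; since two distinct geodesics of $\mathbb{H}^2$ have distinct ideal endpoints and hence diverge, a complete geodesic trapped in a bounded neighbourhood of $\tilde c$ must share both endpoints with $\tilde c$ and therefore coincide with it. Hence every leaf projects to $c$, and as the leaves of a lamination are pairwise disjoint, the lamination is exactly $c$. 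Granting this, the forward statement is immediate: $\hd(\lambda_i, c) \to 0$ means $\lambda_i \subset N_{\epsilon_i}(c)$ with $\epsilon_i \to 0$, so $\epsilon_i < \epsilon_0$ and therefore $\lambda_i = c$ for all large $i$.

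For the converse I would argue by contraposition: assuming $\mu$ is \emph{not} a simple closed geodesic, I will exhibit a sequence $\{\lambda_i\}$ with $\lambda_i \ne \mu$ for all $i$ and $\hd(\lambda_i, \mu) \to 0$, splitting into two cases according to whether $\mu$ is minimal. If $\mu$ is not minimal, it contains a non-empty proper sublamination $\nu$; since $\nu \subset \mu \subset N_\epsilon(\mu)$ for every $\epsilon > 0$ we have $\hd(\nu, \mu) = 0$, so the constant sequence $\lambda_i = \nu$ suffices. If instead $\mu$ is minimal but not a simple closed geodesic, then $\mu$ is not a finite lamination, since its unique minimal component is $\mu$ itself, which by assumption is not a simple closed geodesic. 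By \cref{finite laminations dense} there is a sequence of finite laminations $F_i$ converging to $\mu$ in the Hausdorff topology; as the Hausdorff distance dominates $\hd$, this gives $\hd(F_i, \mu) \to 0$, and each $F_i \ne \mu$ because $\mu$ is not finite. Taking $\lambda_i = F_i$ completes the contrapositive.

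I expect the geometric separation fact of the first paragraph to be the main obstacle, as it is the only place where the hyperbolic geometry of $(S,m)$ enters essentially; the delicate point is ensuring that each leaf's lift is confined to a single component of the preimage of $N_{\epsilon_0}(c)$, which is precisely what the embeddedness of the collar furnishes. The converse, by contrast, is largely bookkeeping once \cref{finite laminations dense} is in hand, the only care being the verification that a minimal lamination which is not a closed geodesic fails to be finite.
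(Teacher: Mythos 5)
Your proof is correct, and it diverges from the paper's in one substantive place. The forward half is exactly the paper's argument with the details supplied: the paper disposes of it in one sentence (``the $\epsilon$-neighbourhood of $c$ is an annulus, and any geodesic lamination contained in such a neighbourhood must be equal to $c$''), and your collar-lemma/universal-cover argument is precisely the justification left implicit there. For the converse, the paper makes the same first reduction you do, only phrased via a minimal component: it picks a minimal component $\mu_0$ of $\mu$, notes $\hd(\mu_0,\mu)=0$, and concludes $\mu_0=\mu$, which is your ``not minimal'' case with $\nu=\mu_0$. Where you genuinely differ is the remaining case of a minimal $\mu$ that is not a simple closed geodesic: the paper gives an explicit construction, taking a long arc $a_i$ on a dense leaf that returns $1/i$-close to itself, closing it up by a short transverse arc $b_i$, and letting $c_i$ be the closed geodesic homotopic to $a_i\cup b_i$, so that $c_i\to\mu$ in the Hausdorff topology and $\hd(c_i,\mu)\to 0$ with $c_i\neq\mu$; you instead invoke \cref{finite laminations dense}, correctly observing that such a $\mu$ is not a finite lamination (its unique minimal component is $\mu$ itself), so the approximating finite laminations are automatically distinct from $\mu$. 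Both routes are valid; yours is shorter given \cref{finite laminations dense} as a black box, though the proof of that density result is essentially the same closing-up construction. What the paper's explicit version buys is approximation by genuine simple closed geodesics rather than arbitrary finite laminations, and this is reused later: the proofs of \cref{multicurve} and \cref{multicurves} both cite ``as was shown in the proof of \cref{scc}'' exactly this sequence of closed geodesics, so under your variant those later arguments would need to cite the density lemma (or reinstate the construction) instead.
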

\begin{proof}
The first half of the statement is  easy. Indeed, for any sufficiently small $\epsilon >0$, the $\epsilon$-neighbourhood of $c$ is an annulus, and we can see that any geodesic lamination contained in such a neighbourhood must be equal to $c$.

For the second half, let $\mu$ be a geodesic lamination satisfying the condition in the statement.
Let $\mu_0$ be a minimal component of $\mu$ (see \S \ref{s:pre}).
Then we have $\hd(\mu_0, \mu)=0$.
Therefore by the assumption of the second half of the lemma, $\mu_0=\mu$, which means that $\mu$ consists of only one minimal component and does not have any non-compact isolated leaf.
If $\mu$ is not a simple geodesic, then the minimal component can be approximated by a sequence of simple closed geodesics $c_i$ in the Hausdorff topology as follows.
Take a leaf $l$ of $\mu$. For each positive integer $i$, choose an arc $a_i$ on $l$ with length greater than $i$ whose endpoints can be joined by a geodesic arc $b_i$ transverse to $l$ of length less than $1/i$ and such that the endpoints of the arc $a_i$ arrive on different sides of $b_i$.
Since $l$ is dense in $\mu$, the closed geodesic $c_i$ homotopic to $a_i\cup b_i$ converges to $\lambda$ in the Hausdorff topology.
For such a sequence $\{c_i\}$, we have $\hd(c_i, \mu) \to 0$,  contradicting the assumption made.
Thus the only possibility is that $\mu$ is a simple closed geodesic.
\end{proof}

We next show that the inclusion relation between geodesic laminations is preserved by $f$.

\begin{lemma}
\label{inclusion}
Suppose that $\lambda \subset \mu$ for two geodesic laminations.
Then $f(\lambda) \subset f(\mu)$.
\end{lemma}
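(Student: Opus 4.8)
The plan is to express the inclusion relation $\lambda\subset\mu$ purely in terms of $\hd$, and then to invoke the equivalence $\hd(\lambda,\mu)=0 \Leftrightarrow \hd(f(\lambda),f(\mu))=0$ that was already recorded above for any bijection $f$ preserving left Hausdorff convergence. In other words, the strategy is to turn a set-theoretic hypothesis into a statement about vanishing distance, which is exactly the kind of statement $f$ is known to respect.

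The key observation I would establish first is the identity
\[
\hd(\lambda,\mu)=0 \quad\Longleftrightarrow\quad \lambda\subset\mu .
\]
Indeed, if $\lambda\subset\mu$, then $\lambda\subset N_\epsilon(\mu)$ for every $\epsilon\geq 0$, so the infimum defining $\hd(\lambda,\mu)$ is $0$. Conversely, if $\hd(\lambda,\mu)=0$, then $\lambda\subset N_\epsilon(\mu)$ for every $\epsilon>0$, whence $\lambda\subset\bigcap_{\epsilon>0}N_\epsilon(\mu)$; since $\mu$ is a closed subset of $S$, this intersection is exactly $\mu$, and therefore $\lambda\subset\mu$. The only step that deserves attention is this last one, where the closedness of the geodesic lamination $\mu$ is what collapses the intersection of $\epsilon$-neighbourhoods back to $\mu$ itself. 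This is the sole subtle point, but it is not really an obstacle: geodesic laminations are closed by definition.

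With the identity in hand, the lemma follows immediately. Assume $\lambda\subset\mu$. By the identity, $\hd(\lambda,\mu)=0$. By the equivalence valid for $f$, we get $\hd(f(\lambda),f(\mu))=0$. Applying the identity once more, this time to the ordered pair $f(\lambda),f(\mu)$, yields $f(\lambda)\subset f(\mu)$, which is the desired conclusion. Thus there is no genuine difficulty here; the entire content of the argument is the recognition that, for laminations, vanishing left Hausdorff distance is synonymous with inclusion, after which the preservation property of $f$ does all the work.
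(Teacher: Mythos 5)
Your proposal is correct and follows exactly the paper's route: the paper's proof of this lemma consists precisely of citing the equivalence $\lambda \subset \mu \Leftrightarrow \hd(\lambda,\mu)=0$ together with the already-recorded fact that $f$ preserves vanishing of $\hd$. You have merely supplied the (correct) verification of that equivalence, using that a geodesic lamination is a closed subset of $S$, which the paper leaves implicit.
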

\begin{proof}
This follows from the equivalence $\lambda \subset \mu \Leftrightarrow \hd(\lambda, \mu)=0$.
\end{proof}

A geodesic lamination consisting of a collection of disjoint simple closed geodesics on $S$ whose number of connected components is $\geq 1$ will be called a {\em multicurve}. 
(Thus, we regard a simple closed geodesic also as a multicurve.) From now on, let us abuse notation and write ``component" instead of ``connected component" for multicurves (this notion of ``component" does not coincide with our definition of 
 component of a lamination).

We can characterise multicurves with at least two components as follows:

\begin{lemma}
\label{multicurve}
A geodesic lamination $\mu$ is a multicurve, but not a simple closed geodesic  if and only if the following conditions hold:
\begin{enumerate}[(a)]
\item If $\hd(\lambda_i, \mu) \to 0$, then $\lambda_i$ is contained in $\mu$ for large $i$.
\item $\mu$ is the union of geodesic laminations properly contained in $\mu$.
\end{enumerate}
\end{lemma}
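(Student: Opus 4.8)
The plan is to prove both implications of the stated equivalence, relying throughout on the basic fact (already used in the proof of \cref{inclusion}) that $\hd(\lambda,\mu)=0$ is equivalent to $\lambda\subset\mu$, together with the approximation technique developed in the proof of \cref{scc}. The guiding principle is that condition (a) is an isolation property that by itself singles out multicurves among all geodesic laminations, while condition (b) serves only to discard the single simple closed geodesic, leaving exactly the multicurves with at least two components.

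For the forward implication, suppose $\mu=c_1\cup\cdots\cup c_k$ is a multicurve with $k\geq 2$. To verify (a), I would note that for all sufficiently small $\epsilon>0$ the neighbourhood $N_\epsilon(\mu)$ is a disjoint union of thin annuli, one around each $c_j$; if $\hd(\lambda_i,\mu)\to 0$ then $\lambda_i\subset N_{\epsilon_i}(\mu)$ with $\epsilon_i\to 0$, and by the annulus argument already used in \cref{scc} the part of $\lambda_i$ lying in the annulus around $c_j$ must be empty or equal to $c_j$, so $\lambda_i\subset\mu$ for large $i$. Condition (b) is immediate, since $\mu$ is the union of its components $c_1,\ldots,c_k$, each of which is properly contained in $\mu$ precisely because $k\geq 2$.

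For the converse, assume $\mu$ satisfies (a) and (b); I would establish that $\mu$ is a multicurve in two steps, using the decomposition of an arbitrary lamination into finitely many minimal components and finitely many non-compact isolated leaves recalled in \cref{s:pre}. First, if some minimal component $\mu_0$ of $\mu$ were not a simple closed geodesic, then the construction in the proof of \cref{scc} produces simple closed geodesics $c_i$ with $c_i\to\mu_0$ in the Hausdorff topology and with lengths tending to infinity; since $\mu_0\subset\mu$ gives $\hd(c_i,\mu)\leq\hd(c_i,\mu_0)\to 0$, condition (a) would force the infinitely many distinct $c_i$ to lie in $\mu$, contradicting the fact that a geodesic lamination contains only finitely many closed leaves. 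Hence every minimal component of $\mu$ is a simple closed geodesic. Second, if $\mu$ had a non-compact isolated leaf $\ell$, its closure $\overline{\ell}$ is contained in $\mu$ and can be Hausdorff-approximated by simple closed geodesics spiralling along $\ell$ around the minimal components into which its ends spiral; these again form infinitely many distinct closed geodesics with $\hd(c_i,\mu)\to 0$, contradicting (a) in the same way. Therefore $\mu$ has no non-compact isolated leaf and consists only of simple closed geodesics, i.e.\ $\mu$ is a multicurve. Finally, condition (b) rules out $\mu$ being a single simple closed geodesic, whose only proper sublamination is empty, so $\mu$ is a multicurve with at least two components.

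The main obstacle I anticipate is the second step of the converse: constructing, for a lamination carrying a non-compact isolated leaf, an explicit sequence of simple closed geodesics that left-Hausdorff converges to $\mu$ yet is not contained in it. The convergence must be controlled carefully so that the approximating curves track the spiralling leaf and the minimal components around which it accumulates, and one must confirm that these curves are genuinely distinct and exterior to $\mu$; the finiteness of the number of closed leaves of a geodesic lamination is what converts this construction into the desired contradiction with (a).
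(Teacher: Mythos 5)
Your forward implication and the first step of your converse are sound and agree in substance with the paper: the disjoint-annuli argument, the approximating closed geodesics from the proof of \cref{scc}, and the observation that a geodesic lamination has only finitely many closed leaves are exactly what is needed to show that every minimal component of $\mu$ is a simple closed geodesic. The genuine gap is in the second step of your converse --- precisely the step you flagged as the anticipated obstacle. You claim that the closure of any non-compact isolated leaf $\ell$ of $\mu$ can be Hausdorff-approximated by simple closed geodesics spiralling along $\ell$, so that condition (a) alone excludes such leaves. This is false in general: the approximation exists only when $\ell$ is \emph{coherent} in the sense of \cref{unapproachable}, i.e.\ when the spiralling directions at the two ends are compatible with realizing $\ell$ as a limit of iterated Dehn twists (this is the content of \cref{extension}, whose proof explicitly uses coherence). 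For an incoherent leaf, any simple closed curve tracking $\ell$ and winding in the annular neighbourhood of the limit component would have to close up against the direction of spiralling at one end, so no such sequence of curves exists. The Remark immediately following the lemma in the paper records exactly this phenomenon: the union of a simple closed geodesic $c$ and a single leaf $\ell$ spiralling around it on one side \emph{satisfies} condition (a). Hence your guiding principle --- that (a) by itself singles out multicurves and (b) merely discards the single simple closed geodesic --- is wrong, and your step 2 fails for incoherent leaves.

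The paper's proof assigns to condition (b) the job you tried to do with (a): after using (a), via the proof of \cref{scc}, to show that no minimal component is other than a simple closed geodesic, it invokes (b) to conclude that $\mu$ is the union of its minimal components with more than one of them. The relevant point is that a sublamination containing a non-compact isolated leaf must contain the closure of that leaf; in the Remark's example $c\cup\ell$ the only proper sublaminations are $\emptyset$ and $c$, so their union misses $\ell$ and (b) fails, which is exactly why (b) appears in the statement. To repair your write-up, delete the approximation argument for isolated leaves (or restrict it to the coherent case, where it does work but is not needed) and replace it with this sublamination argument from (b); your step 1 and your final use of (b) to exclude a single simple closed geodesic, whose only proper sublamination is empty, can stand as written.
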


\begin{remark} The second condition is necessary since the first condition alone is satisfied by a union of a simple closed geodesic and one single non-compact isolated leaf spiralling around it on one side.
\end{remark}

\begin{proof}[Proof of Lemma \ref{multicurve}]
It is clear that multicurves which are not simple closed geodesics satisfy these two conditions.
To prove the converse, suppose that $\mu$ satisfies these two conditions. Then, as was shown in the proof of \cref{scc}, $\mu$ cannot have any minimal component which is not a simple closed geodesic.
Condition (b) implies that $\mu$ is the union of its minimal components, and that there are more than one components.
Therefore $\mu$ is a multicurve which is not a simple closed geodesic.
\end{proof}

By \cref{scc,multicurve}, $f$ takes any multicurve to a multicurve.

Now we show that $f$ preserves the number of components for multicurves.

\begin{lemma}
Let $n$ be a positive integer. If $\mu$ is a multicurve with $n$ components, then so is $f(\mu)$.
\end{lemma}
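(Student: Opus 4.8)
The plan is to recast the number of components of a multicurve as an order-theoretic invariant and to exploit the fact that $f$ is an automorphism of the inclusion-ordered set $(\gl(S),\subset)$. First I would record that $f$ preserves inclusion in both directions: combining the equivalence $\lambda\subset\mu\Leftrightarrow\hd(\lambda,\mu)=0$ used in the proof of \cref{inclusion} with the equivalence $\hd(\lambda,\mu)=0\Leftrightarrow\hd(f(\lambda),f(\mu))=0$ noted just after the definition of preservation of left Hausdorff convergence, we get
\[
\lambda\subset\mu\ \Longleftrightarrow\ f(\lambda)\subset f(\mu)
\]
for all $\lambda,\mu\in\gl(S)$. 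Hence $f$ is an isomorphism of the poset $(\gl(S),\subset)$ onto itself, and in particular it restricts to an order isomorphism from the set of sublaminations of $\mu$ onto the set of sublaminations of $f(\mu)$.

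Next I would characterise the components of a multicurve purely in terms of this partial order. If $\mu=c_1\cup\dots\cup c_n$ is a multicurve, then every sublamination of $\mu$ is a union of some of the simple closed geodesics $c_j$, since the leaves of $\mu$ are exactly the $c_j$ and any closed union of such leaves is again such a subunion. Therefore the nonempty sublaminations of $\mu$, ordered by inclusion, form a poset isomorphic to the poset of nonempty subsets of an $n$-element set, whose minimal elements are precisely the singletons. In other words, the connected components of $\mu$ are exactly the minimal nonempty elements of the poset of sublaminations of $\mu$, so their number is an order-theoretic invariant.

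Finally I would put the two observations together. We already know, by \cref{scc,multicurve}, that $f(\mu)$ is again a multicurve, say with $m$ components. Since the order isomorphism $f$ carries the minimal nonempty sublaminations of $\mu$ bijectively onto those of $f(\mu)$, and these number $n$ and $m$ respectively, we conclude $m=n$. The step I expect to require the most care—and which is the only real obstacle—is the second one, namely verifying that a multicurve admits no nonempty sublamination other than a subunion of its components, so that its minimal nonempty sublaminations coincide with its connected components; this is precisely where the hypothesis that $\mu$ is a multicurve, rather than a general geodesic lamination, is used in an essential way.
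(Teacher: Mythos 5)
Your proof is correct, and it follows a genuinely different route from the paper's, although both rest on the same two ingredients: that $f$ respects inclusion and that $f(\mu)$ is already known to be a multicurve by \cref{scc,multicurve}. The paper argues by induction on $n$, characterising an $n$-component multicurve by the property that every properly contained lamination is a multicurve with at most $n-1$ components, and noting that this property is preserved by $f$; you instead count atoms, identifying the components of a multicurve with the minimal nonempty elements of its poset of sublaminations and applying the order isomorphism once, with no induction. Your version buys two things. First, you make explicit the two-way preservation $\lambda \subset \mu \Leftrightarrow f(\lambda) \subset f(\mu)$, which is indeed what is needed (the paper's \cref{inclusion} states only one direction, though the converse follows by the same argument applied to $f^{-1}$, exactly as your chain of equivalences shows). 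Second, the paper's inductive characterisation, read literally among all geodesic laminations, is also satisfied by certain non-multicurves---for instance, a simple closed geodesic together with a single non-compact leaf spiralling onto it satisfies ``every properly contained lamination is a multicurve with at most one component'' without being a two-component multicurve---so it only becomes a valid characterisation once one knows $f(\mu)$ is a multicurve; your argument invokes that fact explicitly and at the right moment. You correctly flag the one step needing care: that the nonempty sublaminations of a multicurve $c_1 \cup \dots \cup c_n$ are precisely the nonempty subunions of the $c_j$, which holds because a complete geodesic contained in a disjoint union of simple closed geodesics is, by connectedness and completeness, equal to one of them; with that, the sublamination poset is the lattice of nonempty subsets of an $n$-set and the component count is visibly an order invariant.
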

\begin{proof}
For $n=1$, the statement is nothing but \cref{scc}.
A multicurve $\mu$ with two components is characterised by the property that
``if $\mu$ contains $\lambda$ then either $\lambda$ is a simple closed geodesic or $\mu=\lambda$".
Therefore $f$ preserves this property.
Inductively, a multicurve $\mu$ with $n$ components is characterised by the property that
``if $\mu$ contains $\lambda$, then $\lambda$ is a multicurve with at most $n-1$ components or $\lambda=\mu$."
This property is also preserved by $f$.
\end{proof}

We also note that
$n$ simple closed geodesics are pairwise disjoint if and only if there exists a multicurve containing all of them.
Therefore disjointness of simple closed curves is also preserved by $f$.
Combining these properties, we see that $f$ induces an automorphism on the curve complex $\mathcal{C}(S)$ of $S$.
By Ivanov's theorem \cite{Iv}, this implies that there is a homeomorphism of $S$ inducing the same map as $f$ on $\mathcal{C}(S)$.
Thus we have the following.

\begin{corollary}
\label{Ivanov}
Let $f$ be a bijection on $\gl(S)$ preserving left Hausdorff convergence.
Then there is a homeomorphism $g: S \to S$ such that $f$ and $g$ induce the same simplicial automorphism on $\mathcal{C}(S)$.
\end{corollary}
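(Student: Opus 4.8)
The plan is to assemble the lemmas already established in this section into a simplicial automorphism of $\mathcal{C}(S)$ and then to invoke Ivanov's theorem. Recall that, for the fixed hyperbolic metric $m$, the vertices of the curve complex $\mathcal{C}(S)$ are in natural bijection with the simple closed geodesics on $S$ (each essential isotopy class containing a unique geodesic representative), and that a finite collection of such geodesics spans a simplex precisely when its members are pairwise disjoint. Thus the entire task reduces to checking that $f$ induces a bijection of this vertex set which carries simplices to simplices in both directions.

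First I would note that the hypothesis on $f$ is symmetric: substituting $(f(\lambda_i),f(\mu))$ for $(\lambda_i,\mu)$ in the defining equivalence shows that $f^{-1}$ also preserves left Hausdorff convergence. Hence every characterisation obtained in this section for $f$ applies verbatim to $f^{-1}$. In particular, by the lemma asserting that $f(c)$ is a simple closed geodesic whenever $c$ is, combined with the same statement for $f^{-1}$, the map $f$ restricts to a \emph{bijection} of the set of simple closed geodesics, that is, to a bijection of the vertex set of $\mathcal{C}(S)$.

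Next I would promote this vertex bijection to a simplicial map. By \cref{scc,multicurve} and the lemma on the number of components, $f$ carries multicurves to multicurves; and since $n$ simple closed geodesics are pairwise disjoint exactly when some multicurve contains all of them, while $f$ preserves inclusion by \cref{inclusion}, the $f$-image of a pairwise disjoint family is again pairwise disjoint. Running the identical argument for $f^{-1}$ shows that $f$ sends simplices bijectively onto simplices, so the induced vertex bijection $f_*$ is a simplicial automorphism of $\mathcal{C}(S)$. Finally I would apply Ivanov's theorem \cite{Iv}: since $S$ is closed and orientable of genus $g\ge 2$, the natural homomorphism $\mathrm{Mod}^*(S)\to\mathrm{Aut}(\mathcal{C}(S))$ is surjective, so there exists a homeomorphism $g\colon S\to S$ whose induced action on $\mathcal{C}(S)$ coincides with $f_*$, and this $g$ is the required map.

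There is no deep obstacle in this corollary itself, since all the geometric content has already been absorbed into the earlier lemmas. The only points requiring care are the two that I have isolated above: using the symmetry of the hypothesis to control $f^{-1}$ and thereby obtain genuine bijectivity (rather than mere injectivity) on vertices and on simplices, and invoking the \emph{surjectivity} part of the curve-complex rigidity, which is precisely what holds in the range $g\ge 2$ and which avoids the low-complexity exceptional surfaces where Ivanov-type results can fail.
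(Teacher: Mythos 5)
Your proposal is correct and follows essentially the same route as the paper: the text preceding the corollary assembles exactly these ingredients (simple closed geodesics go to simple closed geodesics, multicurves to multicurves with the same number of components, disjointness characterised by containment in a common multicurve) to get an automorphism of $\mathcal{C}(S)$, and then invokes Ivanov's theorem. Your explicit observation that the hypothesis is symmetric, so $f^{-1}$ satisfies the same characterisations and the induced map is genuinely an automorphism rather than just a simplicial endomorphism, is a point the paper leaves implicit (it is built into $f$ being a bijection preserving the convergence in both directions), and making it explicit is a welcome bit of care rather than a deviation.
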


Ivanov's theorem also shows that this homeomorphism $g$ is unique up to isotopy provided that $\g{S} \geq 3$.
When $\g{S}=2$, there are two choices of isotopy classes whose difference is represented by a hyperelliptic involution.
Indeed, the hyperelliptic involution $\iota$ acts on $\mathcal{C}(S)$ trivially, and hence $g$ and $\iota \circ g$ induce the same action on $\mathcal{C}(S)$.

\section{Approximable laminations}
In this section, $f$ is as before a bijection of $\gl(S)$ preserving left Hausdorff convergence, and $g$ denotes an automorphism of $S$ inducing the same map as $f$ on the curve complex $\mathcal{C}(S)$.
For any geodesic $\ell$ on $S$, we abuse the symbol $g(\ell)$ to denote the geodesic homotopic to $g(\ell)$.
In this way, we regard $g$ as acting on $\gl(S)$.

\begin{definition}
We say that a geodesic lamination $\mu$ is  {\em approximable} when there is a sequence of multicurves  $c_i$ which converges to $\mu$ in the (ordinary) Hausdorff topology.
We denote by $\al(S)$ the subset of approximable laminations of $\gl(S)$.
\end{definition}

%

\begin{lemma}
\label{multicurves}
If $\lambda$ is a union of its minimal components, then it is approximable.
\end{lemma}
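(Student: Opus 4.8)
The plan is to construct the approximating multicurves one minimal component at a time and then take unions. Since $\lambda$ is the union of its minimal components, its canonical decomposition has no non-compact isolated leaves, so we may write $\lambda = \mu_1 \cup \cdots \cup \mu_k$ with $\mu_1, \dots, \mu_k$ the pairwise disjoint minimal components. First I would approximate each $\mu_j$ individually by a sequence of simple closed geodesics $c_{j,i}$ converging to $\mu_j$ in the (symmetric) Hausdorff topology. If $\mu_j$ is already a simple closed geodesic, I take $c_{j,i} = \mu_j$ for all $i$; otherwise $\mu_j$ is a minimal lamination that is not a simple closed geodesic, and I invoke the construction already performed in the proof of \cref{scc}, where such a component is approximated by closing up an arbitrarily long arc of a dense leaf with an arbitrarily short transverse arc.

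Next I would assemble these sequences by setting $c_i = \bigcup_{j=1}^{k} c_{j,i}$, the candidate approximating multicurves. The hard part, and indeed the only genuinely nontrivial step, will be to guarantee that $c_i$ really is a multicurve: a finite union of simple closed geodesics is a multicurve precisely when its members are pairwise disjoint, and nothing in the independent constructions of the $c_{j,i}$ for distinct indices $j$ rules out crossings a priori. To settle this I would exploit that the minimal components $\mu_1, \dots, \mu_k$ are pairwise disjoint compact subsets of $S$, so there is some $\delta > 0$ with $N_\delta(\mu_j) \cap N_\delta(\mu_{j'}) = \emptyset$ whenever $j \neq j'$. Because $c_{j,i} \to \mu_j$ in the Hausdorff topology, for all sufficiently large $i$ we have $c_{j,i} \subset N_\delta(\mu_j)$; hence the geodesics $c_{1,i}, \dots, c_{k,i}$ are pairwise disjoint for large $i$, and $c_i$ is an honest multicurve.

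Finally I would check that $c_i$ converges to $\lambda$ in the Hausdorff topology. This reduces to the elementary fact that the Hausdorff distance of finite unions is dominated by the componentwise distances, namely $d_H\!\left(\bigcup_{j} A_j, \bigcup_{j} B_j\right) \le \max_{j} d_H(A_j, B_j)$ for compact sets; applying this with $A_j = c_{j,i}$ and $B_j = \mu_j$ and letting $i \to \infty$ yields $c_i \to \bigcup_j \mu_j = \lambda$. Thus $\lambda$ is a Hausdorff limit of multicurves and is therefore approximable. All the substantive work lies in the disjointness step, since the approximation of a single minimal component is supplied by \cref{scc} and the union estimate is routine.
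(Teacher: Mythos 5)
Your proof is correct and takes essentially the same route as the paper's: approximate each minimal component by simple closed geodesics via the construction in the proof of \cref{scc}, arrange disjointness of the approximating curves for distinct components, and pass to the union. The only difference is that you spell out the disjointness step (disjoint $\delta$-neighbourhoods of the compact minimal components plus Hausdorff convergence) and the union estimate, both of which the paper simply asserts.
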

\begin{proof}
First suppose that $\lambda$ is minimal.
Then as was shown in the proof of \cref{scc} there is a sequence of closed geodesics $\{c_i\}$ converging to $\lambda$ in the Hausdorff topology.
In the general case, we can take a sequence of closed geodesics $\{c_i^j\}$ as above for each minimal component $\lambda_j$ so that the $c_i^j \cap c_i^{j'}=\emptyset$ if $j \neq j'$.
Then the union $\cup_j c_i^j$ converges to $\lambda$ as $i \to \infty$ in the Hausdorff topology. 
\end{proof}


Since inclusion is preserved by $f$ (\cref{inclusion}), any minimal lamination is mapped by $f$ to a minimal lamination.

\begin{lemma}
\label{approx}
If $\mu$ is an approximable lamination, then there is a sequence of multicurves $\{c_i\}$ with the following two properties: \begin{enumerate}[(i)]
\item $\hd(c_i, \mu) \to 0$.
\item Any $\lambda$ such that $\hd(c_i, \lambda) \to 0$ contains $\mu$.
\end{enumerate} 
\end{lemma}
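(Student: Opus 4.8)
The plan is to observe that approximability already hands us a sequence converging to $\mu$ in the full (symmetric) Hausdorff topology, that property (i) is simply one half of this symmetric convergence, and that property (ii) falls out of combining \emph{both} halves. In particular, I would argue that no special choice of sequence is needed: any Hausdorff-approximating sequence of multicurves works.

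First I would invoke the hypothesis directly: since $\mu \in \al(S)$, there is a sequence of multicurves $\{c_i\}$ with $c_i \to \mu$ in the ordinary Hausdorff topology, meaning $\max\{\hd(c_i,\mu),\,\hd(\mu,c_i)\}\to 0$. Property (i) is then immediate, being exactly the first of these two quantities. The whole point is that (i) only records the ``outward'' direction $\hd(c_i,\mu)\to 0$, while (ii) will exploit the complementary ``inward'' direction $\hd(\mu,c_i)\to 0$ that (i) discards.

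For property (ii), let $\lambda$ be any geodesic lamination with $\hd(c_i,\lambda)\to 0$; I would show $\mu\subset\lambda$ by a pointwise limit argument. Fix $p\in\mu$. From $\hd(\mu,c_i)\to 0$ and the compactness of each $c_i$, I can pick $p_i\in c_i$ with $d_m(p,p_i)\to 0$; from $\hd(c_i,\lambda)\to 0$ and the compactness of $\lambda$, I can then pick $q_i\in\lambda$ with $d_m(p_i,q_i)\to 0$. The triangle inequality gives $q_i\to p$, and since $\lambda$ is closed we conclude $p\in\lambda$. As $p\in\mu$ was arbitrary, $\mu\subset\lambda$, which is precisely (ii).

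I do not anticipate a genuine obstacle: the argument is a short point-set computation, and the only facts used beyond the definitions are that $S$ is a closed surface---so that geodesic laminations are compact subsets of $S$ and the distance from a point to such a set is attained---together with the fact that geodesic laminations are closed by definition. The one conceptual point worth stating clearly is the asymmetry itself: (ii) genuinely needs the inward Hausdorff estimate $\hd(\mu,c_i)\to 0$, so the lemma would fail if one tried to produce the sequence using left Hausdorff convergence alone rather than full Hausdorff convergence (a sequence $c_i$ with only $\hd(c_i,\mu)\to 0$ could accumulate on a proper sublamination of $\mu$).
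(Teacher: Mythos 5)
Your proposal is correct and takes essentially the same route as the paper: the paper's (very terse) proof likewise takes an arbitrary sequence of multicurves converging to $\mu$ in the ordinary Hausdorff topology and observes that (i) and (ii) follow from the definitions, with (ii) resting precisely on the inward estimate $\hd(\mu, c_i) \to 0$ that you make explicit. Your pointwise triangle-inequality argument simply fills in the details the paper leaves to the reader.
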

\begin{proof}
Let $\{c_i\}$ be a sequence of multicurves converging to $\mu$ in the Hausdorff topology.
Then by the definition of the Hausdorff topology and $\hd$, we have (i) and (ii).
\end{proof}

\begin{corollary}
\label{approximable rigidity}
If $\mu$ is an approximable lamination, then $f(\mu)=g(\mu)$.
\end{corollary}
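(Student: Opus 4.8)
The plan is to pin down $f(\mu)$ by a double inclusion, exploiting the fact that $f$ and $g$ agree on all multicurves together with the two properties of the approximating sequence supplied by \cref{approx}. First I would record the key compatibility $f(c)=g(c)$ for every multicurve $c$. Writing $c=\bigcup_j c^j$ as the union of its components, each $f(c^j)$ is a simple closed geodesic contained in the multicurve $f(c)$ (by \cref{scc,inclusion}); since $f$ preserves the number of components, $f(c)=\bigcup_j f(c^j)$. Because $f$ and $g$ induce the same automorphism of $\mathcal{C}(S)$ (\cref{Ivanov}), each $f(c^j)$ is the geodesic representative of $g(c^j)$, so $f(c)=\bigcup_j g(c^j)=g(c)$. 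I would also use that $g$, being the class of a homeomorphism of $S$, acts as a self-homeomorphism of $\gl(S)$ for the Hausdorff topology; hence a Hausdorff-convergent sequence of multicurves $c_i\to\mu$ is carried to a Hausdorff-convergent sequence $g(c_i)\to g(\mu)$, and in particular $g(\mu)$ is again approximable.

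Now fix, via \cref{approx}, multicurves $c_i$ converging to $\mu$ in the Hausdorff topology, so that $\hd(c_i,\mu)\to 0$ and any $\lambda$ with $\hd(c_i,\lambda)\to 0$ contains $\mu$. For the inclusion $g(\mu)\subseteq f(\mu)$: since $f$ preserves left Hausdorff convergence and $f(c_i)=g(c_i)$, we get $\hd(g(c_i),f(\mu))\to 0$; applying property (ii) of \cref{approx} to the approximable lamination $g(\mu)$ with its approximating multicurves $g(c_i)$, we conclude $g(\mu)\subseteq f(\mu)$. For the reverse inclusion $f(\mu)\subseteq g(\mu)$ I would run the symmetric argument with $f^{-1}$, which also preserves left Hausdorff convergence: property (i) of \cref{approx} for $g(\mu)$ gives $\hd(g(c_i),g(\mu))\to 0$, hence $\hd(f^{-1}(g(c_i)),f^{-1}(g(\mu)))\to 0$; since $f^{-1}(g(c_i))=f^{-1}(f(c_i))=c_i$, this reads $\hd(c_i,f^{-1}(g(\mu)))\to 0$, and property (ii) for $\mu$ yields $\mu\subseteq f^{-1}(g(\mu))$. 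Applying $f$ and using that $f$ preserves inclusion (\cref{inclusion}) gives $f(\mu)\subseteq g(\mu)$. Combining the two inclusions proves $f(\mu)=g(\mu)$.

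The step I expect to be the main obstacle is obtaining $g(\mu)\subseteq f(\mu)$ honestly: the hypothesis only controls where the multicurves $g(c_i)$ sit relative to $f(\mu)$, namely $\hd(g(c_i),f(\mu))\to 0$, and there is no symmetric statement bounding $f(\mu)$ from the outside, so one genuinely needs both that $g(c_i)\to g(\mu)$ in the \emph{full} Hausdorff topology (to know the limit points of $g(c_i)$ lie in $f(\mu)$) and the second, ``forcing'' property of \cref{approx}. This is also why the reverse inclusion cannot be read off from the same data and must instead be extracted by passing to $f^{-1}$. The only non-formal inputs are the continuity of the $g$-action for the Hausdorff topology and the multicurve identity $f(c)=g(c)$; everything else is a bookkeeping of left Hausdorff convergences through $f$ and $f^{-1}$.
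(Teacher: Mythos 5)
Your proof is correct and takes essentially the same route as the paper: the paper also fixes $\{c_i\}$ from \cref{approx}, uses $f(c_i)=g(c_i)$ together with the transferred property (ii) to get one inclusion, and obtains the reverse inclusion ``by exchanging the roles of $f$ and $g$,'' which is exactly your explicit $f^{-1}$ computation. You merely spell out details the paper leaves implicit, namely the verification that $f(c)=g(c)$ for multicurves and the continuity of the $g$-action in the Hausdorff topology.
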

\begin{proof}
Take $\{c_i\}$ as in \cref{approx}.
Then $\hd(f(c_i), f(\mu)) \to 0$, since $f$ preserves left Hausdorff convergence.
By \cref{inclusion}, if $\hd(f(c_i), \lambda)\to 0$, then $\lambda$ contains $f(\mu)$.
Since $g(c_i)=f(c_i)$, we have $\hd(f(c_i), g(\mu)) \to 0$, and hence $g(\mu)$ contains $f(\mu)$.
Since $g$ is a homeomorphism of $S$, it also preserves left Hausdorff convergence and inclusion.
Thus, by exchanging the roles of $f$ and $g$, $f(\mu)$ contains $g(\mu)$.
\end{proof}

\section{Non-compact isolated leaves}\label{s:non-compact}

\begin{definition}
\label{limit component}
Let $l$ be a non-compact isolated leaf of a geodesic lamination $\lambda$. Recall that each end of $l$ spirals around some minimal component of $\lambda$ (see \S \ref{s:pre}).
Thus, there are one or two minimal components around which the two ends of $l$ spiral. Fixing an orientation on $l$ and letting $L^+(l)$ be the limit component in the positive direction and $L^-(l)$ the one in the negative direction, we call these two minimal components the {\em limit components} of $l$ and denote them by  $L^+(l), L^-(l)$. 
\end{definition}
Note that the two limit components $L^+(l), L^-(l)$ may coincide.  
Note also that the distinction between the positive and the negative direction, and hence the orientation given on $l$, will turn out to be irrelevant in our argument as we shall see below.

The limit components are minimal components, and hence are contained in $\al(S)$.

\begin{lemma}
\label{smallest lamination}
Let $f$ be a bijection of $\gl(S)$ preserving left Hausdorff convergence.
Let $\lambda$ be a geodesic lamination and suppose that  $\lambda$ has a non-compact isolated  leaf $\ell$.
Let  $L^+(\ell), L^-(\ell)$ (possibly equal) be the limit components of $\ell$.
Then $f(\lambda)$ contains  $f(L^+(\ell)), f(L^-(\ell))$ as minimal components and  an isolated leaf having $f(L^+(\ell)), f(L^-(\ell))$ as its limit components.
\end{lemma}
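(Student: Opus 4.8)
The plan is to reconstruct the fine structure of $f(\lambda)$ near the image of $\ell$ using only the order-theoretic data that $f$ is guaranteed to preserve. First I would record the toolkit. Since the defining condition for preserving left Hausdorff convergence is symmetric under relabelling, $f^{-1}$ enjoys it as well; hence $f$ preserves the relation $\lambda\subseteq\mu$ in both directions (via $\hd(\lambda,\mu)=0\Leftrightarrow\lambda\subseteq\mu$ together with \cref{inclusion} applied to $f$ and to $f^{-1}$), and it carries minimal laminations to minimal laminations in both directions. Consequently, for every lamination $\kappa$ the map $f$ restricts to an inclusion-preserving bijection between the minimal sublaminations of $\kappa$ and those of $f(\kappa)$; in particular it matches up minimal components.

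Next I would set $\lambda^-=\lambda\setminus\ell$. Because $\ell$ is isolated, a thin tubular neighbourhood of $\ell$ meets $\lambda$ only in $\ell$, so $\lambda^-$ is closed, hence a sublamination, and it contains $L^+(\ell),L^-(\ell)$. Two features then transfer through $f$. On one hand, $\ell$ is the only leaf of $\lambda$ outside $\lambda^-$, so no lamination lies strictly between $\lambda^-$ and $\lambda$; preservation of inclusions in both directions yields that no lamination lies strictly between $f(\lambda^-)$ and $f(\lambda)$, and that $f(\lambda^-)\subsetneq f(\lambda)$. On the other hand, the non-compact leaf $\ell$ is not a minimal component, so $\lambda$ and $\lambda^-$ have the same minimal components; by the bijection above, every minimal component of $f(\lambda)$ is contained in $f(\lambda^-)$ (call this property $(*)$).

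Now I would choose a leaf $\ell'$ of $f(\lambda)$ not contained in $f(\lambda^-)$. By the decomposition recalled in \cref{s:pre}, $\ell'$ either lies in a minimal component of $f(\lambda)$ or is a non-compact isolated leaf. The first alternative is excluded by $(*)$, which would force $\ell'\subset f(\lambda^-)$; hence $\ell'$ is a non-compact isolated leaf, its limit components are minimal components of $f(\lambda)$ and so lie in $f(\lambda^-)$ by $(*)$, and therefore $f(\lambda^-)\cup\ell'$ is closed and is a sublamination strictly containing $f(\lambda^-)$. By the covering property this gives $f(\lambda)=f(\lambda^-)\cup\ell'$, so $\ell'$ is the \emph{unique} leaf of $f(\lambda)$ outside $f(\lambda^-)$.

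It remains to identify the limit components of $\ell'$ as $f(L^+(\ell))$ and $f(L^-(\ell))$, which I expect to be the main obstacle: $f$ respects only inclusion and minimality, not the local geometry recording which minimal components a leaf spirals onto. To pin this down I would run the same argument on the smaller pair $\mu_0^-=L^+(\ell)\cup L^-(\ell)\subset\mu_0=\overline{\ell}$, obtaining a non-compact isolated leaf $\ell''$ of $f(\mu_0)$ whose limit components lie among the minimal components of $f(\mu_0^-)$, namely among $f(L^+(\ell)),f(L^-(\ell))$. Transferring the fact that $\mu_0$ is the only sublamination of itself containing the non-compact leaf $\ell$, I get that $f(\mu_0)$ is the only sublamination of itself containing $\ell''$, so $\overline{\ell''}=f(\mu_0)$ and both $f(L^\pm(\ell))$ occur as limit components of $\ell''$. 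Finally $\mu_0\subseteq\lambda$ gives $f(\mu_0)\subseteq f(\lambda)$, so $\ell''$ is a leaf of $f(\lambda)$; it cannot lie in $f(\lambda^-)$, for otherwise $f(\mu_0)\subseteq f(\lambda^-)$ and hence $\mu_0\subseteq\lambda^-$, contradicting $\ell\in\mu_0\setminus\lambda^-$. By the uniqueness established above, $\ell''=\ell'$, and this exhibits $f(\lambda)$ as containing $f(L^+(\ell)),f(L^-(\ell))$ as minimal components together with the non-compact isolated leaf $\ell'$ having them as its limit components.
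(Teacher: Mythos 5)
Your proof is correct, and while it pivots on the same structural fact as the paper's proof---that the closure $\overline{\ell}=L^+(\ell)\cup L^-(\ell)\cup\ell$ is an immediate successor of $L^+(\ell)\cup L^-(\ell)$ in the inclusion order on sublaminations, and that $f$ transfers this order---it differs in two genuine respects. First, the paper identifies $f(L^+(\ell)\cup L^-(\ell))$ as a union of two minimal laminations by invoking \cref{approximable rigidity}, i.e.\ the homeomorphism $g$ already known to realise $f$ on approximable laminations; you never use $g$, instead deriving everything from two-sided preservation of inclusion (correctly justified: the defining condition for preserving left Hausdorff convergence is an equivalence, so $f^{-1}$ satisfies it too, and then $f$ induces an inclusion-preserving bijection on sublaminations matching minimal components). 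This makes your argument purely order-theoretic and formally independent of the approximability machinery of the preceding section. Second, your extra layer with $\lambda^-=\lambda\setminus\ell$ gives uniqueness of the leaf of $f(\lambda)$ outside $f(\lambda^-)$ and, more to the point, shows the new leaf is a non-compact isolated leaf of $f(\lambda)$ itself, not merely of the sublamination $f(\overline{\ell})$; the paper ends with ``$\ell_f$ is contained in $f(L)\subset f(\lambda)$'' and leaves this (easy) promotion implicit. The step where you force both $f(L^+(\ell))$ and $f(L^-(\ell))$ to be limit components---the transferred dichotomy that every sublamination of $f(\overline{\ell})$ is contained in $f(L^+(\ell)\cup L^-(\ell))$ or equals $f(\overline{\ell})$, applied to $\overline{\ell''}$---is the same mechanism as the paper's contradiction via an intermediate lamination $L'$, just phrased positively rather than by contradiction.

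Two small points of hygiene. Closedness of $\lambda\setminus\ell$ should not be argued via a ``thin tubular neighbourhood'' of $\ell$: no uniform tube around $\ell$ avoids the limit components onto which its ends spiral. It follows instead from the finite decomposition, since for every leaf $\ell_0$ of $\lambda$ other than $\ell$ the set $\overline{\ell_0}\setminus\ell_0$ is contained in the minimal components, all of which lie in $\lambda\setminus\ell$. Also, since $f$ acts on laminations and not on individual leaves, the fact you ``transfer'' should be stated in lamination-theoretic form---every sublamination of $\overline{\ell}$ is either contained in $L^+(\ell)\cup L^-(\ell)$ or equal to $\overline{\ell}$---which is in fact the form you use when applying it to $\overline{\ell''}$; the wording ``the only sublamination of itself containing the leaf $\ell$'' should be adjusted accordingly.
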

\begin{proof}
Suppose that $\ell$ has two distinct limit components $L^+(\ell), L^-(\ell)$, and consider the geodesic lamination $L^+(\ell) \cup L^-(\ell) \cup \ell$, which we denote by $L$.
Then $L$ is a sublamination of $\lambda$.
By \cref{inclusion}, $f(\lambda)$ contains $f(L)$.
On the other hand, since $f$ is induced by a homeomorphism $g$ of $S$ on $\al(S)$, $f(L^+(\ell) \cup L^-(\ell))=g(L^+(\ell)) \cup g(L^-(\ell))$ is the union of two minimal laminations, which we denote by $L^+_f, L^-_f$.
Since $L^+(\ell) \cup L^-(\ell)$ is the union of all minimal components of $L$ and since this property is preserved by $f$,
 $L^+_f, L^-_f$ are the minimal components of $f(L)$.

By our definition,  $L$ contains $L^+(\ell) \cup L^-(\ell)$ properly and it is minimal among all laminations containing $L^+(\ell) \cup L^-(\ell)$ properly.
This property is preserved by $f$.
Therefore $f(L) \setminus (L^+_f \cup L^-_f)$ contains only one leaf, and it is a non-compact isolated leaf, which we denote by $\ell_f$.
If $\ell_f$ has only one of $L^+_f, L^-_f$, say $L^+_f$, as its limit component, then we have proper inclusions $f(L^+_f) \subsetneq f(L^+_f) \cup \ell_f \subsetneq f(L)$.
By applying $f^{-1}$ to these inclusions, we get a geodesic lamination $L'$ such that $L^+_f \subsetneq L' \subsetneq L$.  By our definition of $\ell$, this implies that $L'=L^+(\ell) \cup L^-(\ell)$.
This is a contradiction since we would have then $L^+_f\cup L^-_f=f(L')=f(L^+_f) \cup \ell_f$.

Thus, we have shown that $\ell_f$ has both $L^+_f$ and $L^-_f$ as limit components.
Since $\ell_f$ is contained in $f(L) \subset f(\lambda)$, we are done in this case.
The same kind of argument works also in the case when $\ell$ has only one limit component.
\end{proof}

\section{Finite laminations}\label{s:finite}

We next refine \cref{smallest lamination} to the case when $\lambda$ is a finite lamination to show that $f(\lambda)$ contains a leaf ``homotopic relative to compact leaves" to $g(\ell)$  (or $\iota \circ g(\ell)$ when $\mathrm{genus}(S)=2$  where $\iota$ is as before the hyperelliptic involution in genus $2$) as a unique non-compact isolated leaf. Here, we say that two non-compact leaves are homotopic relative to compact leaves if they spiral around the same pair of compact leaves and if they are homotopic to each other outside thin annular neighbourhoods of their limit components.
The formal definition is as follows:

\begin{definition}
Let $\ell$ be a non-compact isolated leaf of a finite lamination $\lambda \in \gl(S)$ with limit components $L^+, L^-$ ($L^+$ and $L^-$ may coincide).
Then the {\em homotopy class of $\ell$ relative to compact leaves} is defined to be the homotopy class of $\ell \setminus (A(L^+) \cup A(L^-))$ on $S \setminus (A(L^+) \cup A(L^-))$ where $A(L^+), A(L^-)$ denote annular neighbourhoods of $L^+$ and $L^-$ which are disjoint from each other and from the other minimal components of $\lambda$.
\end{definition}

\begin{lemma}
\label{homotopy class}
Let $\ell$ be a non-compact isolated leaf of a finite lamination $\lambda \in \gl(S)$.
Then there is a leaf of  $f(\lambda)$ which has the same limit components as $g(\ell)$.
The leaf is homotopic relative to compact leaves to $g(\ell)$ when $\g S \geq 3$.
When $\g S =2$ the leaf is homotopic relative to compact leaves to either $g(\ell)$ or $\iota \circ g(\ell)$, where $\iota$ is a hyperelliptic involution.
\end{lemma}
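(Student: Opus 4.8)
The plan is to determine the leaf of $f(\lambda)$ produced by \cref{smallest lamination} by detecting it through the simple closed geodesics disjoint from it, and then to read off its homotopy class relative to compact leaves inside a small subsurface.

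\emph{Limit components.} Since $\lambda$ is finite, the limit components $L^+:=L^+(\ell)$ and $L^-:=L^-(\ell)$ are simple closed geodesics, hence approximable, so \cref{approximable rigidity} gives $f(L^\pm)=g(L^\pm)$. By \cref{smallest lamination}, $f(\lambda)$ contains $g(L^+),g(L^-)$ as minimal components together with a non-compact isolated leaf $\ell_f$ whose limit components are $g(L^\pm)$. As $g$ is a homeomorphism, $g(\ell)$ spirals onto $g(L^\pm)$ as well, so $\ell_f$ and $g(\ell)$ share the same limit components; this proves the first assertion.

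\emph{Disjoint curves.} Next I would show that a simple closed geodesic is disjoint from $\ell_f$ if and only if it is disjoint from $g(\ell)$. Writing $L:=L^+\cup L^-\cup\ell$, for any simple closed geodesic $c$ disjoint from $L$ the union $c\cup L$ is a geodesic lamination; multicurves are approximable (\cref{multicurves}), so $f(c)=g(c)$, and since $f$ preserves inclusion (\cref{inclusion}) the lamination $f(c\cup L)$ contains both $g(c)$ and $\ell_f$ (the latter because $\ell_f\subset f(L)$). Being leaves of one lamination, $g(c)$ and $\ell_f$ are disjoint. Running the same argument for $f^{-1}$, whose action on $\mathcal{C}(S)$ is induced by $g^{-1}$, gives the converse. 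Thus $\ell_f$ and $g(\ell)$ are disjoint from exactly the same simple closed geodesics.

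\emph{Localisation and the main obstacle.} Let $N(L)$ be a regular neighbourhood of $L$ and let $W$ be the component of the complement of the multicurve $g(\partial N(L))$ containing $g(L^\pm)$. By the previous paragraph $\ell_f$ is disjoint from $g(\partial N(L))$ and spirals onto $g(L^\pm)\subset W$, so $\ell_f\subset W=g(N(L))$, and likewise $g(\ell)\subset W$. Since $L$ is a spine of $N(L)$, the arc $g(\ell)$ fills $W$ relative to its cuffs $g(L^\pm)$, and equality of the disjoint-curve sets forces $\ell_f$ to fill $W$ as well. It then remains to classify the simple arcs in $W$ that spiral onto $g(L^\pm)$ and fill $W$. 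When $g(L^+)\neq g(L^-)$, or when $g(L^+)=g(L^-)$ but $W$ is planar, $W$ is a pair of pants and such an arc is unique up to homotopy relative to compact leaves, so $\ell_f$ is homotopic relative to compact leaves to $g(\ell)$. The remaining case, where $g(L^+)=g(L^-)$ and $W$ is a one-holed torus, is the crux: there $W$ carries (at most) two filling classes, interchanged by the elliptic involution of $W$, and these cannot be separated by disjointness alone, since any simple closed curve meeting $W$ essentially must cross every filling arc. I expect this to be the main obstacle. To resolve it I would note that the involution of $W$ fixes its cuffs and hence acts trivially on the curves of $W$; it extends to a homeomorphism of $S$ acting trivially on $\mathcal{C}(S)$ precisely when $\g{S}=2$, where it is the hyperelliptic involution $\iota$. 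Accordingly, a supplementary argument—distinguishing the two candidate arcs by an auxiliary lamination whose image under $f$ sees the difference, together with the triviality of the centre of $\mathrm{Mod}^*(S)$ for $\g{S}\geq 3$—should exclude the second class when $\g{S}\geq 3$, giving $\ell_f$ homotopic relative to compact leaves to $g(\ell)$, while for $\g{S}=2$ it yields exactly the stated dichotomy between $g(\ell)$ and $\iota\circ g(\ell)$.
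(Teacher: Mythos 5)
Your first two steps are sound and close to the paper's own opening moves: the identification of the limit components of the image leaf is exactly what \cref{smallest lamination} together with \cref{approximable rigidity} gives, and the preservation of disjointness from simple closed geodesics via \cref{inclusion} is correct. The genuine gap is the localisation step. First, as stated it is ill-posed: the geodesic multicurve $g(\partial N(L))$ contains $g(L^+)$ and $g(L^-)$ themselves (the boundary of a regular neighbourhood of $L$ includes curves isotopic to $L^+$ and $L^-$), so there is no complementary component ``containing $g(L^\pm)$''. Reading $W$ charitably as $g(N(L))$, the inference ``$\ell_f$ is disjoint from $g(\partial N(L))$ and spirals onto $g(L^\pm)$, so $\ell_f\subset W$'' fails: these facts only place $\ell_f$ in \emph{some} complementary component whose frontier contains both $g(L^+)$ and $g(L^-)$, and $\ell_f$ may spiral onto these cuffs from the opposite side. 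This is precisely the difficulty the paper's proof is organised around: it enlarges $\partial N(L)$ to a full pants decomposition $C$ of $S\setminus(L^+\cup L^-\cup\ell)$ chosen so that, when $\g{S}\geq 3$, \emph{only one} complementary pair of pants of $g(C\cup L^+\cup L^-)$ has both $g(L^+)$ and $g(L^-)$ on its frontier; that uniqueness is what forces the leaf into the correct pair of pants, and its unavoidable failure in genus $2$ (where the two complementary pairs of pants share all three cuffs and are swapped by $\iota$) is the source of the $\iota$ alternative. Your disjoint-curve bookkeeping cannot substitute for this: in genus $2$ an essential spiralling leaf in either of the two pairs of pants has the \emph{same} disjoint-curve set (any simple closed geodesic crossing one of the three cuffs must cross both candidate leaves, since they accumulate on the cuffs), so ``equality of the disjoint-curve sets forces $\ell_f$ to fill $W$'' is false there, and you have not proved it in higher genus either.

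A consequence is that your case analysis misplaces the dichotomy. You locate the ambiguity exclusively in the configuration $g(L^+)=g(L^-)$ with $W$ a one-holed torus, and you assert uniqueness whenever $g(L^+)\neq g(L^-)$; but the genus-$2$ alternative in the lemma arises already when the limit components are distinct, via the opposite pair of pants, so your argument would wrongly conclude that $\ell_f$ is homotopic relative to compact leaves to $g(\ell)$ in that case. Finally, the ``supplementary argument'' you invoke at the crux (an unspecified auxiliary lamination whose $f$-image ``sees the difference'', plus triviality of the centre of $\mathrm{Mod}^*(S)$ for $\g{S}\geq 3$) is only an expectation, not a construction, and the centre of the mapping class group is in any case irrelevant to pinning down a single leaf; the paper needs no such device, because the choice of pants decomposition resolves the ambiguity outright for $\g{S}\geq 3$. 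So while your framework (localise, then classify arcs) is reasonable, the proposal as written has a genuine gap exactly where the lemma is delicate.
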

\begin{proof}
Construct a pants decomposition by taking disjoint simple closed geodesics in $S \setminus (L^+ \cup L^-\cup l)$, so that $L^+\cup L^-$ is contained in only one pair of pants if $\g S \geq 3$,  and denote it by $C$.
We have $f(C\cup L^+ \cup L^-)=g(C\cup L^+ \cup L^-)$ since $f$ and $g$ coincide on $\mathcal C(S)$.
Since  $L^+\cup L^- \cup \ell$ is a geodesic lamination contained in both  $\lambda$ and $C\cup L^+\cup L^- \cup \ell$, $f(L^+\cup L^- \cup \ell)$ is also a geodesic lamination contained in both $f(\lambda)$ and $f(C\cup L^+\cup L^- \cup \ell)$, which implies that $f(\lambda)$ contains a non-compact isolated leaf $l'$ disjoint from $g(C\cup L^+\cup L^-)$ with limit components $g(L^+), g(L^-)$. 
In the case when $\g S \geq 3$, since there is only one pair of pants in $S \setminus g(C\cup L^+\cup L^-)$ whose frontier contains $g(L^+) \cup g(L^-)$, this pair of pants must contain $l'$, and hence is homotopic relative to compact leaves to $g(\ell)$.

In the case when $\g S =2$, it is possible that $l'$ is contained in the pair of pants lying on the opposite side of $g(L^+ \cup L^- \cup c)$ from the one containing $g(\ell)$.
In this case $\iota(l')$ is homotopic relative to compact leaves to $g(\ell)$.
\end{proof}

Next, we shall take into account the direction in which a non-compact isolated leaf spirals around simple closed geodesics.
\begin{definition}
\label{unapproachable}
We call a non-compact isolated leaf $\ell$ of a finite lamination {\em incoherent} when it has only one limit component and if it spirals around this component on its two sides in the same direction. Otherwise, $\ell$ is called {\em coherent}.
\end{definition}

\begin{lemma}
\label{extension}
Let $\ell$ be a coherent non-compact isolated leaf of a finite geodesic lamination $\lambda$.
Then there is an approximable finite lamination $\lambda'$ containing $\ell$.
\end{lemma}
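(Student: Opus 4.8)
The plan is to produce $\lambda'$ together with an explicit sequence of simple closed geodesics (hence multicurves) converging to it in the Hausdorff topology, reusing the ``closing-up'' technique from the proof of \cref{scc}. Since $\lambda$ is finite, the limit components $L^+, L^-$ of $\ell$ are simple closed geodesics, and there are two cases to treat: either $L^+=L^-=:c$ is a single closed geodesic, or $L^+, L^-$ are distinct. Because $\ell$ is coherent, the spiralling directions at its two ends are compatible, and this compatibility is exactly what will guarantee that the approximating closed curves can be chosen embedded.

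First I would treat the case of a single limit component $c$. Here coherence means that the two ends of $\ell$ spiral into $c$ in compatible (not ``same'') directions. For each $i$ I take a long sub-arc $a_i$ of $\ell$ whose endpoints lie deep in the spirals near $c$, and join them by a short geodesic arc $b_i$ crossing $c$ transversally, exactly as in the proof of \cref{scc}. Coherence is precisely the condition ensuring that the geodesic $c_i$ in the homotopy class of $a_i\cup b_i$ is simple: had $\ell$ been incoherent (spiralling in the same direction on both sides of $c$), the two spiral stacks near $c$ would force a self-intersection. As $i\to\infty$ the arcs $a_i$ exhaust $\ell$ while $c_i$ wraps ever more tightly around $c$, so $c_i$ converges in the Hausdorff topology to $\lambda':=c\cup\ell$, a finite lamination containing $\ell$.

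For the case of two distinct limit components $c_1=L^+$, $c_2=L^-$, a single short closing arc no longer suffices, since the two ends of $\ell$ run into different geodesics. Instead I would first adjoin to $\ell$ a partner non-compact leaf $\ell'$ running parallel to $\ell$ between $c_1$ and $c_2$ but spiralling into each of $c_1, c_2$ in the direction opposite to that of $\ell$, so that $\lambda':=c_1\cup c_2\cup\ell\cup\ell'$ is a finite lamination containing $\ell$; since there are two distinct limit components (the automatically coherent subcase), $\ell'$ can be chosen disjoint from $\ell$. I then form closed curves that spiral around $c_1$, run along $\ell$ to $c_2$, spiral around $c_2$, and return along $\ell'$, closing up with short transverse arcs as before. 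The opposite spiralling of $\ell$ and $\ell'$ at each $c_j$ makes the geodesic representatives $c_i$ simple, and letting the numbers of spirals tend to infinity yields $c_i\to\lambda'$ in the Hausdorff topology.

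In both cases $\lambda'$ is a finite lamination containing $\ell$ that is a Hausdorff limit of multicurves, hence approximable, which proves the lemma. The main obstacle is the verification in the middle steps: one must check that the closed-up homotopy classes are genuinely represented by \emph{embedded} geodesics, and that these geodesic representatives converge to $\lambda'$ rather than to some strictly larger lamination. Both points hinge on a careful local analysis in the annular neighbourhoods of the limit components, where the coherence hypothesis enters in an essential way. The genus-$2$ and single-component sub-cases then require only routine bookkeeping once this local picture is in place.
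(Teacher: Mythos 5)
Your overall strategy---approximating by explicit simple closed geodesics that wrap around the limit components---is close in spirit to the paper's proof, which extends the core arc $a$ of $\ell$ to a simple closed curve meeting $L^+\cup L^-$ essentially (and not parallel to $a$) and takes the Hausdorff limit of its iterated Dehn twists about $L^+$ and $L^-$. But your implementation by closing up a long subarc $a_i\subset\ell$ with a short arc $b_i$ misattributes the role of coherence, and this hides a real gap. Since $a_i\cup b_i$ can be chosen embedded, its geodesic representative $c_i$ is \emph{automatically} simple, even when $\ell$ is incoherent; simplicity is not where coherence enters. What coherence governs is the winding. Traversing $a_i\cup b_i$, the $n$ turns made while receding from $c$ along one end and the $n$ turns made while approaching $c$ along the other either add or cancel: in the coherent two-sided case (opposite sides, opposite spiralling directions) the in-annulus strand crosses $c$ with net winding $\pm 2n$, so $c_i$ is essentially $T_c^{\pm 2n}$ of a fixed curve and does converge to $c\cup\ell$; in the incoherent case (same winding direction on the two sides) the windings cancel, the strand is homotopic rel endpoints to a single straight crossing, and $c_i$ is the geodesic representative of a \emph{fixed} homotopy class independent of $i$---it is still simple, but it does not converge to $c\cup\ell$. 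So the obstruction for incoherent leaves is non-convergence, not self-intersection.

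The same cancellation sinks a coherent sub-case you dismiss as ``routine bookkeeping'': a single limit component $c$ with both ends of $\ell$ spiralling on the \emph{same} side of $c$ (coherent by \cref{unapproachable}, since only one side is involved). There the short arc $b_i$ cannot cross $c$; the closed-up curve enters and exits the half-annulus through the same boundary with net winding $+n-n=0$, hence again represents a fixed class and $c_i\not\to c\cup\ell$. No short closing arc can repair this: one needs a return path crossing $c$ essentially, which is precisely why the paper extends $a$ to a simple closed curve whose intersections with the limit components are essential before twisting. Finally, in your two-component case the partner leaf $\ell'$ as literally described---parallel to $\ell$ but spiralling into $c_1,c_2$ in the opposite direction---cannot be disjoint from $\ell$: two geodesic spirals accumulating on the same closed geodesic from the same side with opposite winding must intersect. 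You would have to place $\ell'$ on the opposite sides of $c_1$ and $c_2$, which is exactly the configuration a Dehn twist limit produces automatically; note also the paper's precaution of choosing the return arc \emph{not} parallel to $a$, which guards against the closed-up curve being inessential. With these corrections your construction collapses back into the paper's twisting argument, so the short-arc variant does not yield an independent proof.
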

\begin{proof}
Let $L^+, L^-$ be the limit components of $\ell$ ($L^+$ and $L^-$ may coincide).
We can regard $\ell$ as obtained from an arc $a$ with endpoints lying on $L^+ \cup L^-$ by spiralling it around $L^+$ and $L^-$ infinitely many times.
We extend $a$ to a simple closed curve $c$ so that the endpoints of $a$ are essential intersection of $c$ with $L^+\cup L^-$, without adding an arc parallel to $a$.
By performing Dehn twists around $L^+$ and $L^-$ on $c$ infinitely many times in the same direction as the spiralling of $\ell$ and taking the Hausdorff limit, we get an approximable lamination as we wanted.
(Since $\ell$ is coherent, we can realise $\ell$ by an infinite iteration of Dehn twists.)
\end{proof}

In the case when $\g S =2$ we shall need another lemma.

\begin{lemma}
\label{genus 2}
Suppose that $\g S =2$, and let $\ell$ and $\ell'$ be two coherent non-compact isolated leaves of a finite lamination  which  have the following properties:
\begin{enumerate}[(a)]
\item $\ell$ has two distinct limit components $L^+$ and $L^-$.
\item
One of the limit components $L^+$ of $\ell$ is also a limit component of $\ell'$ whereas the other one, $L^-$, is not.
\item $\ell'$ has either one or two limit components. If $\ell'$ has only one limit component, then its ends spiral around the limit component $L^+$ on the same side of $L^+$.
\item
The leaves $\ell$ and $\ell'$ spiral around $L^+$ on the same side of $L^+$.
\end{enumerate}

Let $\mu_\ell$ denote the union $\ell \cup L^+ \cup L^-$, and $\mu_{\ell'}$ the union of $\ell'$ and its (one or two) limit components.
Then, there is an approximable geodesic lamination $\lambda'$ containing $\mu_\ell \cup \mu_{\ell'}$ and having a leaf which intersects both $\iota(\ell)$ and $\iota(\ell')$ transversely, where $\iota$ denotes as before the hyperelliptic involution.
\end{lemma}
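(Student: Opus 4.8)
\section*{Proof proposal}

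The plan is to produce $\lambda'$ as a Hausdorff limit of multicurves obtained by infinite Dehn twisting, exactly as in the proof of \cref{extension}, but carried out simultaneously for $\ell$ and $\ell'$ and enriched by one extra transverse leaf. Throughout I would fix thin, pairwise disjoint annular neighbourhoods $A(L^+), A(L^-)$ and, when $\ell'$ has a second limit component $N$, also $A(N)$, disjoint from one another and from the remaining minimal components of the ambient finite lamination. Writing $a$ and $a'$ for the compact arcs obtained by cutting $\ell$ and $\ell'$ along the boundaries of these annuli, coherence of $\ell$ and $\ell'$ means each leaf is recovered from $a$, resp.\ $a'$, by spiralling, and condition (d) lets me choose $a$ and $a'$ so that they meet $L^+$ from the same side and are disjoint near $L^+$.

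First I would extend $a \cup a'$ to a multicurve $c_0$ on $S$ whose only essential intersections with $L^+, L^-, N$ are the endpoints of $a$ and $a'$, adding no arc parallel to $a$ or $a'$. Dehn twisting $c_0$ infinitely often around $L^+, L^-$ and $N$ in the directions prescribed by the spiralling of $\ell$ and $\ell'$, and passing to the Hausdorff limit, then yields by the mechanism of \cref{extension} an approximable lamination containing $\mu_\ell \cup \mu_{\ell'}$.

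The heart of the matter is the extra leaf. Here I would analyse the hyperelliptic involution $\iota$ in a neighbourhood of $L^+$: since $\iota$ acts trivially on $\mathcal C(S)$ it fixes the geodesic $L^+$ setwise, and in the configuration at hand its restriction to $L^+$ reverses the two sides of $L^+$ (for a non-separating $L^+$ this is because $\iota|_{L^+}$ is the orientation-reversing involution fixing the two Weierstrass points on $L^+$). Because $\ell$ and $\ell'$ spiral on a common side of $L^+$ by condition (d), their images $\iota(\ell)$ and $\iota(\ell')$ both spiral on the opposite side. I would therefore add to $c_0$ a further arc $b$ whose infinite spiralling produces a leaf $m$ that spirals into $L^+$ along that opposite side, transversally to the spiralling of $\iota(\ell)$ and $\iota(\ell')$; such an $m$ crosses both $\iota(\ell)$ and $\iota(\ell')$ inside $A(L^+)$ while being automatically disjoint from $\ell, \ell'$, which live on the other side. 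Conditions (a)--(c) would be used to pin down the incidence pattern at $L^-$ and $N$, so that $m$ can be completed to a leaf disjoint from all of $L^+, L^-, N, \ell, \ell'$ and the enlarged twisted multicurves still converge in the Hausdorff topology to a geodesic lamination, which is the desired $\lambda'$.

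I expect the main obstacle to be this final construction of $m$: one must simultaneously keep $m$ disjoint from $\ell, \ell'$ and from every limit component --- so that $\lambda'$ is a genuine lamination --- while forcing transverse crossings with both $\iota(\ell)$ and $\iota(\ell')$, whose limit components differ by conditions (a) and (b). The delicate points are verifying that the transverse intersections survive the infinite-twisting limit (they do, since they occur in a fixed compact part of the annular region, away from where the spiralling accumulates) and confirming that the side-interchanging behaviour of $\iota$ at $L^+$ holds in the relevant configuration; this side analysis is exactly what restricts the statement to genus $2$.
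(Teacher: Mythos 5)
Your overall strategy---realise everything as a Hausdorff limit of infinitely twisted multicurves as in \cref{extension}, and force the extra transverse leaf by a side analysis of $\iota$ at $L^+$---is in the right spirit, and your remark that the transverse intersections survive the twisting limit because they occur in a fixed compact region is sound. But the crux of your argument, the claim that $\iota$ interchanges the two sides of $L^+$, is exactly where the gap lies, and you justify it only for non-separating $L^+$ (flagging yourself that it remains to be ``confirmed in the relevant configuration''). Conditions (a)--(d) do not force $L^+$ to be non-separating. If $\ell'$ has two limit components one can rule the separating case out (the second limit component of $\ell'$ would be an essential curve in a one-holed torus disjoint from, but not equal to, $L^-$, which is impossible); but if $\ell'$ has a single limit component $L^+$, all of (a)--(d) can be satisfied with $L^+$ separating. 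In that case $\iota$ preserves each side of $L^+$ (it restricts to the elliptic involution of each complementary one-holed torus), so your leaf $m$, placed on the opposite side of $L^+$ from $\ell$ and $\ell'$, need not meet $\iota(\ell')$ at all; worse, in that configuration $\iota(\mu_{\ell'})$ is homotopic to $\mu_{\ell'}$ relative to compact leaves, so no leaf of a lamination containing $\mu_{\ell'}$ can cross $\iota(\ell')$ transversely, and the construction you aim for is unattainable. A correct argument must therefore exclude or separately neutralise this configuration (in the paper this is harmless only because, at the point where the lemma is used in \cref{approachable}, leaves with $\iota(\mu_{\ell'})=\mu_{\ell'}$ make the two alternatives coincide and drop out of the argument).

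The second soft spot is the completion of $m$: you must route its far end so that $m$ stays disjoint from $\ell$, $\ell'$ and all limit components while keeping every leaf coherent, and you delegate this to ``conditions (a)--(c)'' without a construction. The paper's proof handles both difficulties at once by a global device you do not use: it sets $L'$ to be the second limit component of $\ell'$ (or, when $\ell'$ has only one limit component, an auxiliary closed geodesic disjoint from $\mu_\ell\cup\ell'$, which exists by (c)); then $L^+\cup L^-\cup L'$ is a pants decomposition, conditions (c) and (d) place $\ell$ and $\ell'$ in a single pair of pants $P$, and since $\iota$ interchanges the two complementary pairs of pants (as it does when the decomposing curves are non-separating), $\iota(\ell)$ and $\iota(\ell')$ lie in the other pair of pants $P'$. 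The extra leaf is then taken entirely inside $P'$, so disjointness from $\mu_\ell\cup\mu_{\ell'}$ is automatic and only the spiralling directions need adjusting. Note that once you route the far end of your $m$, the natural way to do so keeps it on the side of $L^+$ opposite to $\ell$ and $\ell'$, i.e.\ inside $P'$: your local picture, properly completed, reconstructs precisely the paper's leaf, so the pants decomposition is the organising step missing from your proposal rather than an alternative route.
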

\begin{proof}
If $\ell'$ has two limit components, let $L'$ be its limit component other than $L^+$.
If $\ell'$ has only one limit component, choose a closed geodesic disjoint from $L^+ \cup \ell \cup L^- \cup \ell'$, and let it be $L'$.
(By the property (c), such a closed geodesic exists.)
Then $L^+ \cup L^- \cup L'$ decompose $S$ into two pairs of pants, $P$ and $P'$.
By the properties (c) and (d), $\ell$ and $\ell'$ are contained in the same pair of pants, say $P$.
Now we can extend $\mu_\ell \cup \mu_{\ell'}$ to a geodesic lamination as we wanted by adding a leaf in $P'$ which intersects $\iota(\ell), \iota(\ell')$ transversely choosing the spiralling directions appropriately. 
\end{proof}

\begin{proposition}
\label{approachable}
Let $\ell$ be a coherent non-compact isolated leaf whose limit components $L^+, L^-$ are simple closed geodesics ($L^+$ and $L^-$ may coincide).
Let $\mu_\ell$ be the geodesic lamination $L^+ \cup \ell \cup L^-$.
Then $f(\mu_\ell)=g(\mu_\ell)$ when $\g S\geq 3$.
In the case when $\g S =2$, we have either $f(\mu_\ell)=g(\mu_\ell)$ or $f(\mu_\ell)=\iota \circ g(\mu_\ell)$, where $\iota$ denotes as before the hyperelliptic involution, and the alternative does not depend on $\ell$.
\end{proposition}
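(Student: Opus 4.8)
The plan is to upgrade the containment information from Lemma \ref{homotopy class} to an exact equality, by using the approximable extensions constructed in Lemmas \ref{extension} and \ref{genus 2} to pin down the precise non-compact leaf in $f(\mu_\ell)$, not merely its limit components and homotopy class relative to compact leaves.

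First I would treat the case $\g S \geq 3$. By Lemma \ref{homotopy class}, $f(\lambda)$ contains, for $\lambda = \mu_\ell$, a non-compact isolated leaf $\ell'$ having the same limit components as $g(\ell)$, namely $g(L^+)$ and $g(L^-)$, and homotopic relative to compact leaves to $g(\ell)$. Since $f$ takes minimal components to minimal components and respects inclusion (Lemmas \ref{inclusion} and the remarks following \cref{multicurves}), we have $f(\mu_\ell)=g(L^+)\cup \ell' \cup g(L^-)$, so the only thing left to determine is the exact spiralling of $\ell'$ around $g(L^{\pm})$; being homotopic relative to compact leaves, $\ell'$ and $g(\ell)$ can differ only by the number and direction of spirals at each limit component. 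Since $\ell$ is coherent, Lemma \ref{extension} produces an approximable finite lamination $\lambda'$ containing $\ell$. Then $\mu_\ell \subset \lambda'$ gives $f(\mu_\ell)\subset f(\lambda')$, and because $\lambda'$ is approximable, Corollary \ref{approximable rigidity} yields $f(\lambda')=g(\lambda')$. Hence $g(L^+)\cup\ell'\cup g(L^-)=f(\mu_\ell)\subset g(\lambda')$, which forces $\ell'$ to be a leaf of $g(\lambda')$. But $g(\lambda')=g(\lambda')$ contains exactly one non-compact isolated leaf with limit components $g(L^+),g(L^-)$ and that homotopy class, namely $g(\ell)$ itself, so $\ell'=g(\ell)$ and $f(\mu_\ell)=g(\mu_\ell)$.

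For $\g S =2$, the same strategy applies but Lemma \ref{homotopy class} leaves an ambiguity: $\ell'$ is homotopic relative to compact leaves to either $g(\ell)$ or $\iota\circ g(\ell)$. Running the approximable-extension argument as above forces $f(\mu_\ell)$ to equal either $g(\mu_\ell)$ or $\iota\circ g(\mu_\ell)$. The remaining task, and the delicate point, is to show this alternative is \emph{global}, i.e.\ the same choice of branch ($g$ versus $\iota\circ g$) occurs for every coherent $\ell$, independently of $\ell$. This is where Lemma \ref{genus 2} enters: given two such leaves $\ell,\ell'$ sharing a limit component and spiralling on the same side, I would build a single approximable $\lambda'$ containing $\mu_\ell\cup\mu_{\ell'}$ whose extra leaf meets both $\iota(\ell)$ and $\iota(\ell')$ transversely. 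If the branch chosen for $\ell$ were $g$ while that for $\ell'$ were $\iota\circ g$ (or vice versa), then inside $f(\lambda')=g(\lambda')$ one of the images would be $\iota$-flipped relative to the other; but transversality/disjointness of leaves inside a single geodesic lamination is preserved by $f$, and the configuration in Lemma \ref{genus 2} is engineered precisely so that a mixed choice produces a transverse intersection inside $f(\lambda')$, contradicting that $f(\lambda')$ is a lamination. Thus the two branches must agree.

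The main obstacle I anticipate is the genus-$2$ coherence of the branch. Proving that a single leaf is sent correctly up to $\iota$ is a localized computation, but propagating one consistent choice across all coherent leaves requires a connectivity argument: one must check that any two coherent leaves can be linked by a chain of pairs satisfying the hypotheses (a)--(d) of Lemma \ref{genus 2}, so that the ``same branch'' relation is transitive and covers all of them. Verifying that Lemma \ref{genus 2}'s hypotheses can always be arranged—in particular finding the auxiliary geodesic $L'$ disjoint from the given configuration and arranging the two leaves to spiral on the same side of the common component—is the step where I expect the real work to lie, together with confirming that $\iota$ acting by the hyperelliptic involution genuinely converts a non-transverse configuration into a transverse one for the relevant leaves.
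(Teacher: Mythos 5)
Your argument for $\g S \geq 3$ coincides with the paper's own proof: extend $\mu_\ell$ to an approximable finite lamination $\lambda'$ by \cref{extension}, obtain $f(\lambda')=g(\lambda')$ from \cref{approximable rigidity}, and use \cref{homotopy class} together with the observation that $g(\lambda')$ contains no leaf homotopic relative to compact leaves to $g(\ell)$ other than $g(\ell)$ itself, to conclude $f(\mu_\ell)=g(\mu_\ell)$. That half is complete and is essentially the paper's argument.

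In genus $2$, however, there is a genuine gap, and you have named it yourself without closing it: nothing in your text proves that the branch ($g$ versus $\iota\circ g$) is the same for \emph{all} coherent leaves $\ell$. \cref{genus 2} applies only to a rather special pair: $\ell$ must have two distinct limit components, $\ell'$ must share exactly one of them, and the two must spiral on the same side of the shared component. The final clause of the proposition (``the alternative does not depend on $\ell$'') therefore needs a connectivity argument that your proposal explicitly defers as an ``anticipated obstacle.'' The paper carries this out in several stages: coherent leaves with a single limit component spiralling on both of its sides satisfy $\iota(\mu_\ell)=\mu_\ell$, so both branches hold and such leaves are discarded; two disjoint coherent leaves in the same pair of pants (outside that excluded case) then automatically satisfy the hypotheses of \cref{genus 2}; intersecting leaves in a common pair of pants are linked through one or two auxiliary disjoint leaves; the injectivity of $f$ upgrades $f(\mu_\ell)=g(\mu_\ell)$ to $f(\iota(\mu_\ell))=g(\iota(\mu_\ell))$, which handles leaves lying in the pair of pants on the opposite side; leaves sharing both limit components but intersecting each other are joined by a finite chain of pairwise disjoint leaves with the same limit components; and arbitrary pairs are finally connected using Hatcher's theorem that any two pants decompositions differ by finitely many elementary moves, yielding a chain $\ell=\ell_1,\dots,\ell_k=\bar\ell$ along which the branch choice propagates. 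Without some such chain argument the alternative could a priori flip between leaves in remote configurations, so your genus-$2$ proof is incomplete at precisely the point the statement requires. Your transversality mechanism for a single application of \cref{genus 2} is, on the other hand, exactly the paper's: a mixed branch choice would force two leaves of the single geodesic lamination $f(\lambda')$ to intersect transversely, the contradiction being run through the third leaf $\ell''$ that meets both $\iota(\ell)$ and $\iota(\ell')$, first comparing $\ell$ with $\ell''$ and then $\ell''$ with $\ell'$.
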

\begin{proof}
By \cref{extension}, there is an approximable finite lamination $\lambda$ containing $\mu_\ell$.
By \cref{approximable rigidity}, we have $f(\lambda)=g(\lambda)$.
On the other hand, if $\g S \geq 3$,  \cref{homotopy class} shows that $f(\mu_\ell)$ consists of $g(L^+) \cup g(L^-)$ together with a non-compact isolated leaf homotopic relative to compact leaves to $g(\ell)$.
Since $f(\lambda)=g(\lambda)$ contains $g(\ell)$, it cannot  contain leaves homotopic relative to compact leaves to $g(\ell)$ other than $g(\ell)$ itself.
Since $f(\mu_\ell)$ is contained in $f(\lambda)=g(\lambda)$,  
the only isolated leaf of $f(\mu_\ell)$ must coincide with $g(\ell)$.
Thus we have completed the proof in the case when $\g S\geq 3$. 

Suppose that $\g S=2$.
Then the same argument as in the case of $\g S \geq 3$ implies that $f(\mu_\ell)$ is either $g(\mu_\ell)$ or $\iota \circ g(\mu_\ell)$.
We need to show that one of the alternatives holds for all $\mu_\ell$.
First consider two non-compact isolated leaves $\ell$ and $\ell'$ as in the statement of \cref{genus 2}, and consider the approximating lamination $\lambda'$ provided by that statement.
Since $\lambda'$  has a leaf  $\ell''$ intersecting $\iota(\ell), \iota(\ell')$ transversely, if $f(\mu_\ell)=g(\mu_\ell)$, we cannot have $f(\mu_{\ell''})=\iota \circ g(\mu_{\ell''})$, for both $f(\mu_\ell)$ and $f(\mu_{\ell''})$ are contained in $f(\lambda')$, and hence we have $f(\mu_{\ell''})=g(\mu_{\ell''})$, and by the same argument $f(\mu_{\ell'})=g(\mu_{\ell'})$ holds.
In the same way, we see that  $f(\mu_\ell)=\iota \circ g(\mu_\ell)$ implies $f(\mu_{\ell'})=\iota \circ g(\mu_{\ell'})$.
Thus one of the alternatives holds for both $\mu_\ell$ and $\mu_{\ell'}$.

From now on, we shall only show that $f(\mu_\ell)=g(\mu_\ell)$ implies $f(\mu_{\ell'})=g(\mu_{\ell'})$ in a more general setting for $\ell$ and $\ell'$.
We shall omit the argument for the case where $f(\mu_\ell)=\iota\circ g(\mu_\ell)$, for every step goes in a parallel way.

If $\ell$ has only one limit component and spirals around it on its both sides, we have $\iota(\mu_\ell)=\mu_\ell$, since $\ell$ is coherent.
Therefore both alternatives hold for  such a case, and this can be excluded from the argument.
Except for this case, if two disjoint coherent non-compact isolated leaves $\ell$ and $\ell'$ lie in the same pair of pants $P$, then they must satisfy the conditions of \cref{genus 2}.
Therefore for two disjoint isolated non-compact leaves $\ell$ and $\ell'$, we see that  $f(\mu_\ell)=g(\mu_\ell)$ implies $f(\mu_{\ell'})=g(\mu_{\ell'})$.
If two coherent non-compact isolated leaves $\ell$ and $\ell'$ lying in the same pair of pants $P$ intersect, then we can choose either another non-compact isolated leaf $\ell''$ which is disjoint from both $\ell$ and $\ell'$ or a pair of disjoint non-compact isolated leaves $\ell''_1, \ell''_2$ such that $\ell\cap \ell''_1 = \emptyset, \ell''_2 \cap \ell'=\emptyset$, and therefore we get the same conclusion for $\ell$ and $\ell'$ as before.

Since $f$ is injective on $\gl(S)$, if $f(\mu_\ell)=g(\mu_\ell)$, then we must have $f(\iota(\mu_\ell))=g(\iota(\mu_\ell))$.
Combining this with what we have just proved, we see that if $\ell$ and $\ell'$ are two coherent non-compact isolated leaves whose limit components are simple closed geodesics and if there is a pants decomposition $P$ which is disjoint from both $\ell$ and $\ell'$, then $f(\mu_\ell)=g(\mu_\ell)$ implies $f(\mu_{\ell'})=g(\mu_{\ell'})$. 
We note that in particular, if $\ell$ and $\ell'$ are disjoint and their limit components coincide, then we can find a such a pants decomposition $P$.

Next suppose that $\ell$ and $\ell'$ have the same limit components $L^+, L^-$, but that they intersect each other.
Then we can find a sequence of coherent non-compact isolated leaves $\ell=\ell_1, \dots, \ell_k=\ell'$ having the same limit components $L^+, L^-$ such that $\ell_j \cap \ell_{j+1}=\emptyset$.
Therefore, by applying the above argument to $\ell_j$ and $\ell_{j+1}$ inductively, we see that $f(\mu_\ell)=g(\mu_\ell)$ implies  $f(\mu_{\ell'})=g(\mu_{\ell'})$.

Now suppose that $\ell$ and $\bar \ell$ are non-compact isolated leaves of possibly different finite laminations, both of which have two distinct limit components.
A procedure to replace one component $C$ of a pants decomposition $P$ by another curve $C'$ disjoint from $P \setminus C$ and to get a new pants decomposition $(P \setminus C) \cup C'$ is called an elementary move.
It is known that any two pants decompositions of $S$ can be joined by a composition of finitely many elementary moves (see for instance Hatcher \cite{Ha}).
It follows from this that there is a sequence of coherent non-compact isolated leaves $\ell=\ell_1, \ell_2, \dots, \ell_k=\bar \ell$  of finite laminations satisfying the following:
\begin{enumerate}[(i)]
\item $\ell_j$ is contained in a pair of pants of a pants decomposition $C_j$.
\item Either $C_{j+1}=C_j$ or $C_{j+1}$ is obtained from $C_j$ by an elementary move.
\item If $C_{j+!}\neq C_j$, then the limit components of $\ell_{j+1}$ coincide with those of $\ell_{j+1}$.
\end{enumerate}
Therefore, by what we have just proved up to the previous paragraph, we see that  $f(\mu_\ell)=g(\mu_\ell)$ implies $f(\mu_{\bar \ell})=g(\mu_{\bar \ell})$.
Thus we have completed the proof.
\end{proof}

\begin{corollary}
\label{all finite}
If $\g S \geq 3$, then for any finite geodesic lamination $\lambda$ that does not contain any incoherent non-compact isolated leaf, we have $f(\lambda)=g(\lambda)$.
If $\g S=2$, then $f(\lambda)=g(\lambda)$ for any such $\lambda$  or $f(\lambda)=\iota \circ g(\lambda)$ for any such $\lambda$.
\end{corollary}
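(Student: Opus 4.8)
The plan is to decompose a finite lamination $\lambda$ with no incoherent non-compact isolated leaf into elementary pieces on which $f$ is already controlled, combine these using inclusion-preservation, and then recover the reverse inclusion by running the same reasoning on $f^{-1}$. By \cref{finite lamination} and the decomposition recalled in \cref{s:pre}, such a $\lambda$ is the union of its minimal components $c_1,\dots,c_m$, all simple closed geodesics, together with finitely many non-compact isolated leaves $\ell_1,\dots,\ell_n$, each coherent by hypothesis. The limit components of each $\ell_k$ are among the $c_i$, so, writing $\mu_{\ell_k}=L^+(\ell_k)\cup \ell_k\cup L^-(\ell_k)$, we have $\lambda=\bigcup_i c_i\cup\bigcup_k\mu_{\ell_k}$, and every $c_i$ and every $\mu_{\ell_k}$ is a sublamination of $\lambda$.

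For the forward inclusion $f(\lambda)\supseteq g(\lambda)$, I would argue piece by piece. Because $f$ and $g$ induce the same automorphism of $\mathcal C(S)$ and both send a simple closed geodesic to the unique geodesic in its image isotopy class, we have $f(c_i)=g(c_i)$ for every $i$. For the leaves, \cref{approachable} gives $f(\mu_{\ell_k})=g(\mu_{\ell_k})$ when $\g S\geq 3$. Since each $c_i$ and each $\mu_{\ell_k}$ lies in $\lambda$, \cref{inclusion} yields $f(\lambda)\supseteq g(c_i)$ and $f(\lambda)\supseteq g(\mu_{\ell_k})$ for all $i,k$; taking the union and using that $g$ is a homeomorphism gives $f(\lambda)\supseteq g\big(\bigcup_i c_i\cup\bigcup_k\mu_{\ell_k}\big)=g(\lambda)$.

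The reverse inclusion I would obtain by applying the whole argument to $f^{-1}$. The defining equivalence for preservation of left Hausdorff convergence is symmetric, so $f^{-1}$ again preserves it, and by \cref{Ivanov} the homeomorphism realising $f^{-1}$ on $\mathcal C(S)$ may be taken to be $g^{-1}$. A homeomorphism carries finite laminations to finite laminations and preserves the spiralling pattern, hence coherence, so $g(\lambda)$ is again a finite lamination without incoherent leaf. Applying the forward inclusion just proved, now for the pair $(f^{-1},g^{-1})$, to $g(\lambda)$ gives $f^{-1}(g(\lambda))\supseteq g^{-1}(g(\lambda))=\lambda$; applying $f$ and using that it preserves inclusion yields $g(\lambda)\supseteq f(\lambda)$. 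Hence $f(\lambda)=g(\lambda)$ when $\g S\geq 3$.

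Finally, in the case $\g S=2$, \cref{approachable} only guarantees that either $f(\mu_{\ell_k})=g(\mu_{\ell_k})$ for all $k$ or $f(\mu_{\ell_k})=\iota\circ g(\mu_{\ell_k})$ for all $k$, with the alternative independent of the leaf. Since $\iota$ acts trivially on $\mathcal C(S)$, the map $\iota\circ g$ is an equally valid homeomorphism realising $f$, so in the second alternative I would simply replace $g$ by $\iota\circ g$ and repeat the argument verbatim; the uniformity of the alternative is exactly what forces the sign to be the same for every $\lambda$. I expect the reverse inclusion to be the only genuinely delicate step: one must verify that $f^{-1}$ satisfies all hypotheses with associated homeomorphism $g^{-1}$, that $g(\lambda)$ still meets the coherence hypothesis, and, in genus $2$, that the global sign choices for $f$ and $f^{-1}$ are mutually consistent, the last point again resting on the fact that the alternative in \cref{approachable} does not depend on the leaf.
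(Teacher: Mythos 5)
Your proof is correct, and its forward half coincides with the paper's: both establish $f(\lambda)\supseteq g(\lambda)$ by applying \cref{approachable} to each piece $\mu_{\ell_k}$ and \cref{inclusion} to the sublaminations (the paper treats the minimal components via \cref{multicurves} and \cref{approximable rigidity} rather than via agreement on $\mathcal{C}(S)$, but for finite laminations, whose minimal components are simple closed geodesics, these amount to the same thing). Where you genuinely diverge is the reverse inclusion. The paper closes the gap by counting: it asserts that \cref{inclusion} forces $f(\lambda)$ to have the same number of non-compact isolated leaves as $\lambda$, and since $f(\lambda)\supseteq g(\lambda)$ and $g(\lambda)$ already carries that many leaves, equality follows. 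You instead run the argument for $f^{-1}$, which preserves left Hausdorff convergence by the symmetry of the defining equivalence and is realised on $\mathcal{C}(S)$ by $g^{-1}$, obtaining $f^{-1}(g(\lambda))\supseteq\lambda$ and hence $g(\lambda)\supseteq f(\lambda)$. This works, and in one respect it is cleaner than you suggest: the piecewise equalities needed for $f^{-1}$ do not require a fresh application of \cref{approachable} to $f^{-1}$ at all, since from $f(\mu_{\ell_k})=g(\mu_{\ell_k})$ one gets $f^{-1}\bigl(\mu_{g(\ell_k)}\bigr)=\mu_{\ell_k}=g^{-1}\bigl(\mu_{g(\ell_k)}\bigr)$ directly by inverting; this also dissolves the ``mutual consistency of signs'' worry you flag in genus $2$ --- once you set $g'=g$ or $g'=\iota\circ g$ according to the leaf-independent alternative of \cref{approachable} for $f$, the inverse equalities are inherited automatically, so no separate consistency check is needed. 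The trade-off between the two routes: the paper's counting argument is shorter but rests on the implicit claim that preservation of inclusions determines the number of non-compact isolated leaves of the image, whereas your symmetry argument is purely formal from the piece equalities, at the modest cost of verifying that $g(\lambda)$ is again a finite lamination with only coherent leaves (true, since coherence is a topological property of the spiralling pattern preserved by any homeomorphism).
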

\begin{proof}
By \cref{multicurves,approximable rigidity}, $f$ and $g$ coincide on the minimal components of $\lambda$, and by \cref{inclusion}, the number of the non-compact isolated leaves of $f(\lambda)$ is the same as that of $\lambda$.
Let $\ell$ be a non-compact isolated leaf of $\lambda$, which is coherent by assumption.
By \cref{approachable}, we have $f(\mu_\ell)=g(\mu_\ell)$ (or $f(\mu_\ell)=\iota \circ g(\mu_\ell)$ when $\g S=2$), and since $f(\lambda)$ contains $f(\mu_\ell)$, it must have $g(\ell)$ (or $\iota \circ g(\ell)$ when $\g S=2$) as a non-compact isolated leaf.
Since this holds for every non-compact isolated leaf, $f(\lambda)$ contains all non-compact isolated leaves of $g(\lambda)$ (or $\iota \circ g(\lambda)$ when $\g S=2$).
Since $f(\lambda)$ and $g(\lambda)$ have the same number of such leaves, which is equal to the number of non-compact isolated leaves of $\lambda$, we have $f(\lambda)=g(\lambda)$  (or $f(\lambda)=\iota \circ g(\lambda)$ when $\g S=2$).

In the case when $\g S=2$, by \cref{approachable} either $f(\mu_\ell)=g(\mu_\ell)$ for all $\lambda$ and  $\ell$ or $f(\mu_\ell)=\iota \circ g(\mu_\ell)$ for all $\lambda$ and $\ell$.
This shows the second part of our corollary.
\end{proof}

Now we turn to incoherent non-compact isolated leaves.

\begin{lemma}
\label{even unapproachable}
Let $\ell$ be an incoherent non-compact isolated leaf of a finite geodesic lamination, and $L$ its (unique) limit component.
Then for  $\mu_\ell=L \cup \ell$, we have $f(\mu_\ell)=g(\mu_\ell)$ if $\g S \geq 3$.
In the case when $\g S  =2$, we have either $f(\mu_\ell)=g(\mu_\ell)$ or $f(\mu_\ell)=\iota \circ g(\mu_\ell)$, and the alternative does not depend on $\ell$, nor on whether $\ell$ is incoherent or coherent.
\end{lemma}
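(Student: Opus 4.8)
The plan is to reduce the incoherent case to the coherent case already settled in \cref{approachable}. The obstruction to a direct repetition of that argument is that \cref{extension} is unavailable here: since $\ell$ is incoherent it cannot be produced by iterated Dehn twists in a single direction, so there need not be any approximable lamination containing $\mu_\ell$, and we cannot invoke \cref{approximable rigidity}. The point of the proof is that, even so, the image $f(\mu_\ell)$ is almost completely determined by the structural results that hold for \emph{arbitrary} non-compact isolated leaves, and the small remaining ambiguity can be resolved by comparison with coherent leaves.

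First I would apply \cref{smallest lamination} to $\lambda=\mu_\ell$. As $\ell$ has the single limit component $L$, the lamination $\mu_\ell=L\cup\ell$ consists of one minimal component and one non-compact isolated leaf; since $f$ preserves both counts (as in the proof of \cref{all finite}), we obtain $f(\mu_\ell)=g(L)\cup\ell_f$ with $\ell_f$ a non-compact isolated leaf whose single limit component is $g(L)$. Next, \cref{homotopy class}, whose statement does not require coherence, shows that $\ell_f$ is homotopic relative to compact leaves to $g(\ell)$ when $\g S\ge 3$, and to $g(\ell)$ or $\iota\circ g(\ell)$ when $\g S=2$. This homotopy class already records, for each end of $\ell_f$, on which side of $g(L)$ it exits; hence the only freedom left in $\ell_f$ is the spiralling direction (left or right) at each of its two ends.

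To pin down these directions I would compare $\ell_f$ with coherent leaves whose images are known. For each end $e$ of $\ell$ choose a coherent non-compact isolated leaf $\ell_e$ with two distinct limit components, one of them $L$, arranged so that $\ell_e$ approaches $L$ on the same side and in the same spiralling direction as the end $e$ of $\ell$, and so that $\ell_e$ is disjoint from $\ell$; such a leaf exists because $\g S\ge 2$ leaves enough room. By \cref{approachable} we have $f(\mu_{\ell_e})=g(\mu_{\ell_e})$ when $\g S\ge 3$, so the isolated leaf of $f(\mu_{\ell_e})$ is $g(\ell_e)$. Since $\ell$ and $\ell_e$ are disjoint, $\mu_\ell\cup\mu_{\ell_e}$ is a geodesic lamination, hence so is its $f$-image, and therefore $\ell_f$ is disjoint from $g(\ell_e)$. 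Two geodesics spiralling onto the same closed geodesic from the same side are disjoint exactly when they spiral in the same direction; applied near $g(L)$ this forces $\ell_f$ to spiral at the end corresponding to $e$ in the same direction as $g(\ell_e)$, that is, as $g(\ell)$. Carrying this out at both ends fixes both spiralling directions of $\ell_f$, so $\ell_f=g(\ell)$ and $f(\mu_\ell)=g(\mu_\ell)$. When $\g S=2$ the same comparison applies with $g$ replaced by the ambiguous $g$ or $\iota\circ g$: because each $\ell_e$ is coherent and coexists with $\ell$ inside $\mu_\ell\cup\mu_{\ell_e}$, the disjointness of the isolated leaves of $f(\mu_\ell)$ and $f(\mu_{\ell_e})$ inside the common lamination $f(\mu_\ell\cup\mu_{\ell_e})$ forbids one of them from following the $g$-alternative while the other follows the $\iota\circ g$-alternative, exactly as in the genus-two part of \cref{approachable}. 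Thus the alternative for $\mu_\ell$ coincides with the already established, $\ell$-independent alternative for coherent leaves, and in particular does not depend on whether $\ell$ is coherent or incoherent.

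The main obstacle I expect lies in the geometric input of the third paragraph: the fact that disjointness of two geodesics spiralling onto $g(L)$ from a common side detects their spiralling direction, together with the bookkeeping needed to guarantee that the auxiliary coherent leaves $\ell_e$ can be chosen disjoint from $\ell$ while matching its side and direction at the prescribed end, and that $g$, being a homeomorphism, transports this side-and-direction data faithfully from the configuration near $L$ to that near $g(L)$. Everything else is a formal consequence of \cref{smallest lamination,homotopy class,approachable}.
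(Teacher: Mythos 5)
Your proposal is correct and takes essentially the same approach as the paper: the paper likewise determines the spiralling directions of the image leaf by adjoining auxiliary coherent leaves $\ell_1,\ell_2$ spiralling around $L$ on both sides (together with an extra simple closed geodesic $d$), whose images are known by \cref{all finite}, and then combining \cref{homotopy class} with the fact that disjointness from these known leaves forces the spiralling direction at each end, with the genus-$2$ alternative made uniform by the same comparison. The only cosmetic difference is that you run the comparison one end at a time, applying \cref{approachable} to the separate laminations $\mu_\ell\cup\mu_{\ell_e}$, whereas the paper packages both auxiliary leaves into a single finite lamination $\nu'_\ell$ and applies \cref{all finite} once.
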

\begin{proof}
Take a  simple closed geodesic $d$ in $S \setminus \mu_\ell$, and two coherent non-compact isolated leaves $\ell_1$ and $ \ell_2$ as follows:
\begin{enumerate}[1]
\item  $\ell_1$ and $\ell_2$ are disjoint, and are contained in $S \setminus (\mu_\ell \cup d)$.
\item For $j=1,2$, the ends of $\ell_j$ spiral around $d$ and $L$.
\item $\ell_1$ and $\ell_2$ spiral around $L$ on  opposite sides of $L$.
\end{enumerate}
Set $\nu_\ell$ to be $\mu_\ell \cup d \cup \ell_1\cup \ell_2$, and $\nu'_\ell$ to be $d\cup L \cup \ell_1 \cup \ell_2$.

Suppose that $\g S \geq 3$ for the moment.
By \cref{all finite}, we have $f(\nu'_\ell)=g(\nu'_\ell)$.
By \cref{homotopy class}, $f(\nu_\ell)$ has a non-compact isolated leaf $\ell'$ homotopic relative to compact leaves to $g(\ell)$.
Since $\ell'$ is disjoint from $f(\nu'_\ell)$, which must be contained in $f(\nu_\ell)$,  the direction of spiralling is the same as $g(\ell)$ at both ends, and hence $\ell'=g(\ell)$.
Thus we have $f(\nu_\ell)=g(\nu_\ell)$.

Next suppose that $\g S=2$.
By the same argument as in the case of $\g S \geq 3$, if $f(\nu'_\ell)=g(\nu'_\ell)$, we have $f(\nu_\ell)=g(\nu_\ell)$.
Otherwise, we have $f(\nu_\ell)=\iota \circ g(\nu_\ell)$.
Since one of the alternative holds for all $\nu'_\ell$ by \cref{all finite}, we see that the alternative does not depend on $\ell$.
\end{proof}

Now we can prove the following.
\begin{proposition}
\label{finite rigidity}
 If $\g S \geq 3$, we have $f(\lambda)=g(\lambda)$ for all finite geodesic laminations. If $\g S=2$, then $f(\lambda)=g(\lambda)$ for all finite geodesic laminations or $f(\lambda)=\iota \circ g(\lambda)$ for all finite geodesic laminations.
\end{proposition}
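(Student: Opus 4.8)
The plan is to assemble the pieces established in \cref{all finite,even unapproachable} in exactly the manner of the proof of \cref{all finite}, the only novelty being that an arbitrary finite lamination $\lambda$ may now carry \emph{incoherent} non-compact isolated leaves in addition to coherent ones. First I would fix an arbitrary finite geodesic lamination $\lambda$ and recall its canonical decomposition into finitely many minimal components, which are simple closed geodesics since $\lambda$ is finite, together with finitely many non-compact isolated leaves. On the minimal components, $f$ and $g$ already agree by \cref{multicurves,approximable rigidity}, so the whole problem is reduced to matching the non-compact isolated leaves.

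Next I would treat the non-compact isolated leaves one at a time, splitting according to coherence. For a coherent leaf $\ell$ with limit components $L^{\pm}$, \cref{approachable} gives $f(\mu_\ell)=g(\mu_\ell)$ (with the genus-$2$ alternative), and for an incoherent leaf $\ell$ with single limit component $L$, \cref{even unapproachable} gives $f(\mu_\ell)=g(\mu_\ell)$ (again with the genus-$2$ alternative). In either case $\mu_\ell\subset\lambda$, so \cref{inclusion} yields $f(\mu_\ell)\subset f(\lambda)$; hence $f(\lambda)$ contains $g(\ell)$ as a non-compact isolated leaf (in genus $2$, either $g(\ell)$ or $\iota\circ g(\ell)$). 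Running this over every non-compact isolated leaf of $\lambda$, and combining with the coincidence on minimal components, shows that $f(\lambda)$ contains every minimal component and every non-compact isolated leaf of $g(\lambda)$, that is, $g(\lambda)\subset f(\lambda)$.

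To promote this inclusion to an equality I would invoke the fact, already used in the proof of \cref{all finite}, that $f$ preserves the number of non-compact isolated leaves (a consequence of \cref{inclusion}). Since $g$ is a homeomorphism of $S$, the lamination $g(\lambda)$ has the same number of non-compact isolated leaves as $\lambda$, hence the same number as $f(\lambda)$; together with $g(\lambda)\subset f(\lambda)$ this forces $f(\lambda)=g(\lambda)$ when $\g S\geq 3$. For $\g S=2$ the only additional point is the consistency of the alternative, and here \cref{even unapproachable} is decisive: it asserts that whether $f(\mu_\ell)=g(\mu_\ell)$ or $f(\mu_\ell)=\iota\circ g(\mu_\ell)$ depends neither on $\ell$ nor on whether $\ell$ is coherent or incoherent. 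Thus a single global alternative governs every non-compact isolated leaf at once, and the same counting argument gives either $f(\lambda)=g(\lambda)$ for all finite $\lambda$ or $f(\lambda)=\iota\circ g(\lambda)$ for all finite $\lambda$.

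I expect the only delicate point to be the bookkeeping in the last step: ensuring that the minimal components and the non-compact isolated leaves of $f(\lambda)$ are matched correctly against those of $g(\lambda)$ with no leaf left unaccounted for. This is precisely where the preservation of the leaf count does the essential work, turning the one-sided containment $g(\lambda)\subset f(\lambda)$ into the desired equality; all the geometric content has already been absorbed into \cref{approachable,even unapproachable}, so the present proposition is, in effect, a final reassembly.
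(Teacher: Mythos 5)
Your proposal is correct and follows essentially the same route as the paper's proof: establish $g(\lambda)\subset f(\lambda)$ by combining agreement on minimal components with \cref{approachable,even unapproachable} applied to each non-compact isolated leaf, then use preservation of the leaf count (via \cref{inclusion}) to upgrade the inclusion to equality, with \cref{even unapproachable} supplying the global consistency of the genus-$2$ alternative. The only cosmetic difference is that the paper first groups the minimal components and coherent leaves into a sublamination $\lambda'$ and cites \cref{all finite} for it, whereas you apply \cref{approachable} leaf by leaf, which amounts to inlining that corollary's proof.
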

\begin{proof}
We first assume that $\g S \geq 3$.
Let $\lambda'$ be the union of the minimal components  and the coherent non-compact isolated leaves of $\lambda$.
By \cref{all finite}, $f(\lambda')=g(\lambda')$, and hence $f(\lambda)$ contains $g(\lambda')$.
Now, let $\ell$ be an incoherent non-compact isolated leaf of $\lambda$.
By \cref{even unapproachable}, $f(\lambda)$, which contains $f(\mu_\ell)=g(\mu_\ell)$, must contain $g(\ell)$.
Since this holds for every incoherent non-compact isolated leaf of $\lambda$,  $f(\lambda)$ contains $g(\lambda)$.
Since $f$ preserves the inclusions, the number of the leaves of $f(\lambda)$ is the same as that of $\lambda$, hence as that of $g(\lambda)$.
Therefore, the only possibility is $f(\lambda)=g(\lambda)$.

Now we turn to the case when $\g S=2$.
In this case, we have $f(\lambda')=g(\lambda')$ or $f(\lambda')=\iota \circ g(\lambda')$.
If the first possibility holds, this must hold for all $\lambda'$, and also we have $f(\mu_\ell)=g(\mu_\ell)$.
Therefore $f(\lambda)=g(\lambda)$ for every finite geodesic lamination $\lambda$.
Similarly, if $f(\lambda')=\iota \circ g(\lambda')$, then this holds for all $\lambda'$, and hence $f(\lambda)=\iota \circ g(\lambda)$ for every finite geodesic lamination $\lambda$.
\end{proof}

\section{Proof of the main theorem}
Now we are ready to prove \cref{main}.
\begin{proof}[Proof of \cref{main}]

We first show that if $f: \gl(S) \to \gl(S)$ is a bijection preserving left Hausdorff convergence, then there is an extended mapping class $h$ inducing the same bijection on $\gl(S)$.

Let $\lambda$ be a geodesic lamination.
Since finite laminations are dense in $\gl(S)$ with the Hausdorff topology (see \cref{finite laminations dense}), there is a sequence of finite laminations $\{\mu_i\}$ converging to $\lambda$.
By \cref{finite rigidity}, we have $f(\mu_i)=h(\mu_i)$ for some homeomorphism $h: S \to S$.
(This is either $g$ or $\iota \circ g$ in \cref{finite rigidity}.)
Since $h$ is a homeomorphism, $h(\lambda)$ coincides with the Hausdorff limit  $\mu_\infty$ of $h(\mu_i)=f(\mu_i)$.
Since $f$ preserves left Hausdorff convergence, $f(\lambda)$ contains the Hausdorff limit $\mu_\infty$.
As was seen before, $f$ preserves  the number of minimal components and the number of non-compact isolated leaves.
Thus, the only possibility is $f(\lambda)=\mu_\infty$, which is equal to $h(\lambda)$.

Thus, the natural homomorphism $\mathrm{Mod}^*(S)\to\mathrm{Aut}(\mathcal{GL}(S))$ is surjective. 

For $\g S\geq 3$, this homomorphism is injective since if two extended mapping classes induce the same bijection on $\mathcal{GL}(S)$, they induce the same action on the curve complex $C(S)$, and we know by Ivanov's result \cite{Iv} that the natural homomorphism $\mathrm{Mod}^*(S)\to \mathcal{C}(S)$ is injective.

 It remains to consider the case when $\g S=2$. We know that in this case,  if a homeomorphism $h$ of $S$ induces the identity map on the curve complex $\mathcal{C}(S)$, then $h$ is either homotopic to the identity or to the hyperelliptic involution $\iota$ of $S$. But the hyperelliptic involution does not induce the identity map on $\mathcal{GL}(S)$. To see this, take a geodesic pair of pants decomposition of $S$ which is invariant by $\iota$ up to homotopy, and complete it to a geodesic lamination by adding leaves which spiral along the three pants curves in a way that is not invariant by the hyperelliptic  involution $\iota$. Thus, $\iota$ does not induce the identity map on $\gl(S)$. This completes the proof. 
\end{proof}

Let us note finally that introducing the asymmetric Hausdorff distance on the space $\mathcal{GL}(S)$  opens up the way to a collection of questions in this new asymmetric setting. We mention for instance the study of the geodesics of this space (i.e.\ to describe the set of geodesics between any two points, to study their uniqueness, etc.), the study of its the boundary structure, and the relation between this distance function with the other distance functions and topologies on this space.

\medskip

\noindent {\bf Acknowledgement.} The authors are grateful to the anonymous referee who helped them to improve the writing in a substantial way.

\end{document}